\newtheorem{theorem}{Theorem}[section]
\newtheorem{proposition}[theorem]{Proposition}
\newtheorem{corollary}[theorem]{Corollary}
\theoremstyle{definition}
\newtheorem{definition}[theorem]{Definition}
\theoremstyle{remark}
\numberwithin{equation}{section}
\begin{document}
\title{Classical and free infinite divisibility for Boolean stable laws}

\author{Octavio Arizmendi\footnote{
OA: Universit\"{a}t des Saarlandes, FR $6.1-$Mathematik, 66123 Saarbr\"{u}cken, Germany, email: arizmendi@math.uni-sb.de. Supported by DFG-Deutsche Forschungsgemeinschaft Project SP419/8-1} and Takahiro Hasebe\footnote{TH: Graduate School of Science,  Kyoto University,  Kyoto 606-8502, Japan, email: thasebe@math.kyoto-u.ac.jp. Supported by Global COE program ``Fostering top leaders in mathematics---broadening the core and exploring new ground'' at Kyoto university.}}
\date{}

\maketitle 
\begin{abstract}
We completely determine the free infinite divisibility for the Boolean stable law which is parametrized by a stability index $\alpha$ and an asymmetry coefficient $\rho$. We prove that the Boolean stable law is freely infinitely divisible if and only if one of the following conditions holds: $0<\alpha\leq\frac{1}{2}$; $\frac{1}{2}<\alpha\leq\frac{2}{3}$ and $2-\frac{1}{\alpha}\leq\rho \leq \frac{1}{\alpha}-1$;  
 $\alpha=1,~\rho=\frac{1}{2}$. 
Positive Boolean stable laws corresponding to $\rho =1$ and $\alpha \leq \frac{1}{2}$ have completely monotonic densities and they are both freely and classically infinitely divisible. 
We also show that continuous Boolean convolutions of positive Boolean stable laws with different stability indices are also  freely and classically infinitely divisible.
Boolean stable laws, free stable laws and continuous Boolean convolutions of positive Boolean stable laws  are non-trivial examples whose free divisibility indicators are infinity.  We also find that the free multiplicative convolution of Boolean stable laws is again a Boolean stable law. 
\end{abstract}

Mathematics Subject Classification: 46L54, 60E07

\section{Introduction}

In probability theory, infinitely divisible distributions play a central role since they arise as the limit distributions of very general limit 
theorems. In free probability theory \cite{Voi86,Voi87} there is an analogous notion of infinite divisibility. Moreover, the so-called Bercovici-Pata bijection \cite{BePa} maps classically infinitely divisible probability measures to freely infinitely divisible ones. Let $ID(\ast)$ be the class of all classically infinitely divisible distributions on $\mathbb{R}$ and $ID(\boxplus)$ be the class of all freely infinitely divisible distributions on $\mathbb{R}$. 

Classically and freely infinitely divisible probability measures are typically quite different. For example, measures in $ID(\boxplus)$ cannot have more than one atom, while there are many examples of purely atomic measures in $ID(\ast)$. Also, there are many measures in  $ID(\boxplus)$ with compact support, while every non trivial measure in $ID(\ast)$ has unbounded support. So one might expect that, perhaps with very particular exceptions, some probability measures belong to the ``classical world'' and some to the ``free world''. This paper, however, discovers a family of measures lying on the intersection of $ID(\boxplus)$ and $ID(\ast)$. More interestingly, the examples considered
here come from the ``Boolean world'', another framework of non-commutative probability.   

The Boolean convolution was defined in \cite{S-W} and Boolean stable laws, denoted by $\mathbf{b}_\alpha^\rho$, were classified. Here $\alpha$ is the stability index and $\rho$ is an asymmetry coefficient (see Section \ref{prel} below). 

In this paper we consider the classical and free infinite divisibility of Boolean stable laws. On the free probability side, we completely determine which Boolean stable laws are freely infinitely divisible.

\begin{theorem} \label{T1}
The Boolean stable law $\mathbf{b}^\rho_\alpha$ is freely infinitely divisible if and only if:

(1) $0<\alpha\leq\frac{1}{2}$, 
 
(2) $\frac{1}{2}<\alpha\leq\frac{2}{3}$,  $2-\frac{1}{\alpha}\leq\rho \leq \frac{1}{\alpha}-1$, 

(3) $\alpha=1,~\rho=\frac{1}{2}$. 

\end{theorem}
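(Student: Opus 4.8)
The plan is to use the Bercovici--Voiculescu analytic characterization of free infinite divisibility: a measure $\mu$ on $\mathbb{R}$ lies in $ID(\boxplus)$ iff its Voiculescu transform $\varphi_\mu=F_\mu^{\langle -1\rangle}-\mathrm{id}$ (where $F_\mu=1/G_\mu$ and $F_\mu^{\langle -1\rangle}$ is the compositional inverse, initially defined only on a truncated Stolz angle at $\infty$) extends analytically to all of $\mathbb{C}^+$ with $\operatorname{Im}\varphi_\mu(z)\le 0$ there. I would start from the explicit formula $F_{\mathbf{b}^\rho_\alpha}(z)=z+e^{i\pi\alpha\rho}z^{1-\alpha}$, $z\in\mathbb{C}^+$ (principal branch), recorded in Section~\ref{prel}, reserving $\alpha=1$ and $\alpha\in(1,2]$ for separate, easier treatment. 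Composing with the inverse gives the implicit functional equation $\varphi(z)=-e^{i\pi\alpha\rho}(z+\varphi(z))^{1-\alpha}$; writing $\zeta=z+\varphi(z)=F_{\mathbf{b}^\rho_\alpha}^{\langle-1\rangle}(z)$ this just says $z=\zeta+e^{i\pi\alpha\rho}\zeta^{1-\alpha}=:g(\zeta)$, with $\varphi(z)=-e^{i\pi\alpha\rho}\zeta^{1-\alpha}$. So $\mathbf{b}^\rho_\alpha\in ID(\boxplus)$ \emph{iff} the branch of $g^{\langle-1\rangle}$ with $\zeta\sim z$ at $\infty$ continues analytically over $\mathbb{C}^+$ and the resulting $\varphi$ has $\operatorname{Im}\varphi\le 0$ there. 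The reflection $\rho\mapsto 1-\rho$, which reflects $\mathbf{b}^\rho_\alpha$ and preserves $ID(\boxplus)$, lets me assume $\rho\le\tfrac12$ throughout, so that only the lower threshold is binding.

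The core is the global behaviour of $g^{\langle-1\rangle}$, viewed on the Riemann surface of $\zeta^{1-\alpha}$. Continuation from $\infty$ is obstructed only at a critical value $z_c=g(\zeta_c)$, where $g'(\zeta_c)=1+(1-\alpha)e^{i\pi\alpha\rho}\zeta_c^{-\alpha}=0$; one computes $|\zeta_c|=(1-\alpha)^{1/\alpha}$, $z_c=-\tfrac{\alpha}{1-\alpha}\zeta_c$, and $\arg z_c=\pi(\rho+\tfrac1\alpha-1)+2\pi k(\tfrac1\alpha-1)$ over the sheets $k\in\mathbb{Z}$. I would then determine, by following the branch from $\infty$ and analysing the real-analytic curve $g^{-1}(\mathbb{R})=\{\zeta:\operatorname{Im}(\zeta+e^{i\pi\alpha\rho}\zeta^{1-\alpha})=0\}$ together with its behaviour near $0$ and $\infty$, exactly which critical value (which sheet $k$) the relevant branch actually winds around, and likewise track $\operatorname{Im}\varphi$ on the boundary $\mathbb{R}$ and invoke the maximum principle on $\mathbb{C}^+$. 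The upshot should be that both the analytic continuation succeeds and $\operatorname{Im}\varphi\le 0$ precisely when $\arg z_c$ is kept out of $(0,\pi)$ for the relevant sheets, which unwinds to $2-\tfrac1\alpha\le\rho\le\tfrac1\alpha-1$; this interval is nonempty exactly for $\alpha\le\tfrac23$ and is automatic for $\alpha\le\tfrac12$, giving cases (1)--(2). The thresholds $\alpha=\tfrac12$ and $\alpha=\tfrac23$ are the ones at which $\zeta=s^2$, resp.\ $\zeta=s^3$, turns $g$ into a genuine quadratic/cubic in $s=\zeta^{1-\alpha}$ with $g^{\langle-1\rangle}$ in closed form (e.g.\ $s(z)=\tfrac12(-e^{i\pi\rho/2}+\sqrt{e^{i\pi\rho}+4z})$ at $\alpha=\tfrac12$), which I would use as explicit anchor computations.

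For the ``only if'' direction I would show that whenever $\rho<2-\tfrac1\alpha$ or $\rho>\tfrac1\alpha-1$ the relevant branch really meets a critical value inside $\mathbb{C}^+$ (nontrivial monodromy, so $\varphi$ has no single-valued analytic extension), or, in the borderline sub-cases, exhibit an explicit $z_0\in\mathbb{C}^+$ with $\operatorname{Im}\varphi(z_0)>0$. Finally $\alpha=1$: $\mathbf{b}^{1/2}_1$ is the standard Cauchy law, which is trivially in $ID(\boxplus)$, while for $\rho\neq\tfrac12$ the Boolean cumulant transform carries a logarithmic term and the same boundary/monodromy analysis shows free infinite divisibility fails; the range $\alpha\in(1,2]$ is handled by the same critical-value bookkeeping and never produces an FID law.

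I expect the main obstacle to be the second step for generic, non-rational $\alpha$: there is no closed form for $g^{\langle-1\rangle}$, so deciding which critical value the branch continued from $\infty$ actually encircles, and hence pinning down the precise constants $2-\tfrac1\alpha$ and $\tfrac1\alpha-1$, requires a careful study of the topology of the curve $g^{-1}(\mathbb{R})$ together with the monodromy of $\zeta^{1-\alpha}$ --- in particular ruling out, when $\alpha\le\tfrac12$, the ``spurious'' critical values that do lie in $\mathbb{C}^+$ on distant sheets but are never seen by the relevant branch.
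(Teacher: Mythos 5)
Your overall framework is the right one (the paper also works with the explicit $F(z)=z+e^{i\pi\alpha\rho}z^{1-\alpha}$ and decides whether $F^{-1}$, and hence $\varphi=F^{-1}-\mathrm{id}$, continues analytically over $\mathbb{C}^+$ with $\operatorname{Im}\varphi\le 0$), and your critical-value computation is correct: the $k=0$ critical value is exactly the point the paper exploits for the ``only if'' part, where $F$ folds the ray of argument $\theta_3=(\pi+\alpha\rho\pi)/\alpha$ onto itself and $F^{-1}$ acquires a jump at an interior point of $\mathbb{C}^+$. But as a proof the proposal has a genuine gap, and it is the one you yourself flag: the entire content of the theorem --- that continuation from $\infty$ succeeds precisely when $2-\frac1\alpha\le\rho\le\frac1\alpha-1$ --- is asserted as ``the upshot should be'', with the determination of which critical values the distinguished branch actually meets (the topology of $g^{-1}(\mathbb{R})$ plus the monodromy of $\zeta^{1-\alpha}$, including ruling out spurious critical values on far sheets when $\alpha\le\frac12$) left undone. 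In addition, your route to $\operatorname{Im}\varphi\le0$ via boundary values on $\mathbb{R}$ and the maximum principle is not justified: it presupposes continuous boundary behaviour of the continued branch and a Phragm\'en--Lindel\"of-type control of the (unbounded, though sublinear) $\varphi$, none of which is addressed. Finally, for $\alpha=1,\ \rho\ne\frac12$ and $\alpha\in(1,2]$ you only claim that ``the same bookkeeping'' fails; the paper instead uses a different and much shorter argument there, namely that $F_{\mathbf{b}_\alpha^\rho}(0)=\infty$, which is incompatible with the continuous extension to $\mathbb{C}^+\cup\mathbb{R}$ that freely infinitely divisible laws must have by the subordination regularity of Belinschi--Bercovici.

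It is worth noting how the paper sidesteps the global monodromy problem you identify as the main obstacle. For $\alpha\le\frac12$ it proves that $F$ is univalent on the single sector $\mathbb{C}_{(\theta_1,\theta_2)}$, $\theta_1=-\frac{\alpha\rho\pi}{1-\alpha}$, $\theta_2=\frac{\pi-\alpha\rho\pi}{1-\alpha}$, by checking injectivity only on the two boundary rays and invoking a Jordan-curve/argument-principle lemma; this gives membership in the class $\mathcal{UI}$, hence free infinite divisibility, with no sheet analysis at all. For $\frac12<\alpha\le\frac23$ and $2-\frac1\alpha\le\rho\le\frac1\alpha-1$ the sector exceeds angle $2\pi$, and the paper glues two overlapping sectors $U_1,U_2$ on which $F$ is separately univalent, defines $F^{-1}$ piecewise on $\mathbb{C}^+$, and verifies $\operatorname{Im}\varphi\le0$ by a two-line pointwise argument (either the preimage $w$ lies in $\mathbb{C}^+$ and $\operatorname{Im}F(w)\ge\operatorname{Im}w$, or $w$ lies in the closed lower half-plane and $\operatorname{Im}(w-z)\le0$), avoiding any maximum principle. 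If you want to complete your version, you would need to carry out the sheet/winding analysis in full for all irrational $\alpha$, which is likely to end up reproducing, in heavier language, exactly these sector-univalence statements; so I would recommend replacing the monodromy bookkeeping by the boundary-ray injectivity argument, and the maximum-principle step by the pointwise estimate above.
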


We use analytical methods to prove Theorem \ref{T1} but also show a reproducing property of Boolean stable laws from which one can give a much simpler proof of part \emph{(1)} in Theorem  \ref{T1}, when $\alpha=1/n$, 
for $n$ an integer greater than $1$.

Boolean stable laws not only provide new examples of measures which are infinitely divisible in the free sense but also in the classical one. More explicitly, we prove the following.

\begin{theorem}\label{T3} Boolean stable laws on $[0,\infty)$ are $*$-infinitely divisible for $\alpha \leq 1/2$. 
\end{theorem}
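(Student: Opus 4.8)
The plan is to reduce Theorem~\ref{T3} to a classical criterion for infinite divisibility on the half-line and then verify its hypothesis by an explicit integral decomposition of the density. Write $\mathbf{b}_\alpha:=\mathbf{b}_\alpha^{1}$ for the Boolean stable law supported on $[0,\infty)$. Recall from Section~\ref{prel} that, for $0<\alpha<1$, $\mathbf{b}_\alpha$ is absolutely continuous with density
\begin{equation*}
f_\alpha(x)=\frac{\sin(\alpha\pi)}{\pi}\,\frac{x^{\alpha-1}}{x^{2\alpha}+2x^{\alpha}\cos(\alpha\pi)+1},\qquad x>0,
\end{equation*}
where the denominator, viewed as a quadratic in $x^\alpha$, has negative discriminant and is therefore strictly positive on $(0,\infty)$; moreover $f_\alpha(x)\sim c\,x^{\alpha-1}$ as $x\to0$ and $f_\alpha(x)\sim c'\,x^{-\alpha-1}$ as $x\to\infty$, so $f_\alpha$ is indeed a probability density. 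The first step of the proof is to invoke the classical fact (Goldie, Steutel) that a probability measure on $(0,\infty)$ whose density is completely monotone is a mixture of exponential distributions, and that every such mixture is $*$-infinitely divisible. Hence it suffices to prove that $f_\alpha$ is completely monotone on $(0,\infty)$ whenever $0<\alpha\le\frac12$.

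For the complete-monotonicity step, set $y=y(x):=x^{2\alpha}+2x^{\alpha}\cos(\alpha\pi)=x^{\alpha}\bigl(x^{\alpha}+2\cos(\alpha\pi)\bigr)$, so the denominator equals $1+y$. Since $\alpha\le\frac12$ we have $\cos(\alpha\pi)\ge0$, hence $y(x)\ge0$ for all $x>0$, and we may use the representation $\frac{1}{1+y}=\int_0^\infty e^{-u}e^{-uy}\,du$. This yields
\begin{equation*}
f_\alpha(x)=\frac{\sin(\alpha\pi)}{\pi}\int_0^\infty e^{-u}\;x^{\alpha-1}\,e^{-u\,x^{2\alpha}}\,e^{-2u\cos(\alpha\pi)\,x^{\alpha}}\,du .
\end{equation*}
For each fixed $u>0$ the inner function of $x$ is a product of three completely monotone functions: $x^{\alpha-1}=x^{-(1-\alpha)}$ is completely monotone because $1-\alpha\ge0$; $x\mapsto e^{-u\,x^{2\alpha}}$ is completely monotone because $x\mapsto x^{2\alpha}$ is a Bernstein function (here $2\alpha\le1$ is used) and $e^{-(\text{Bernstein})}$ is completely monotone; and $x\mapsto e^{-2u\cos(\alpha\pi)x^{\alpha}}$ is completely monotone because $x\mapsto x^{\alpha}$ is a Bernstein function and $2u\cos(\alpha\pi)\ge0$. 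A product of completely monotone functions is completely monotone, and a positive mixture $\int_0^\infty e^{-u}(\cdots)\,du$ of completely monotone functions is again completely monotone. Therefore $f_\alpha$ is completely monotone, which finishes the proof.

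The restriction $\alpha\le\frac12$ enters in exactly two ways: the positivity $\cos(\alpha\pi)\ge0$, needed both to expand $\frac1{1+y}$ as a positive mixture and to keep $e^{-2u\cos(\alpha\pi)x^\alpha}$ completely monotone, and the inequality $2\alpha\le1$, which is what makes $x^{2\alpha}$ a Bernstein function. The only real obstacle is finding and justifying the above decomposition; once it is in place the argument is a routine application of the stability of complete monotonicity under products and positive mixtures, together with the Goldie--Steutel criterion. (One should also record the standard Tonelli/measurability remark that permits reading off the representing measure of $f_\alpha$ from the displayed integral, all integrands being nonnegative; this is immediate.)
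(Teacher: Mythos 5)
Your proof is correct and follows essentially the same route as the paper: both reduce Theorem~\ref{T3} to the classical criterion that a completely monotone density on $(0,\infty)$ gives a $*$-infinitely divisible law, and both verify complete monotonicity of the explicit density using the constraints $2\alpha\le 1$ and $\cos(\alpha\pi)\ge 0$. Your explicit mixture $\frac{1}{1+y}=\int_0^\infty e^{-u}e^{-uy}\,du$ combined with the Bernstein-function facts is just an unpacked version of the paper's closure lemma (Proposition~\ref{cm10}(2), applied with $h(x)=x^{2\alpha}+2\cos(\alpha\pi)x^\alpha+1$ having completely monotone derivative), so the two arguments are the same in substance.
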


Until now, there were only $5$ known examples of measures in $ID(\boxplus)\cap ID(\ast)$: Cauchy distribution, free $1/2$-stable law, Gaussian, chi-square with one degree of freedom, $F$-distribution with 1,2 degrees of freedom~\cite{AHS, BBLS, PAS}. Theorems \ref{T1} and \ref{T3} give a family of probability measures which are infinitely divisible with respect to both classical and free convolutions. Moreover, continuous Boolean convolutions of Boolean stable laws also belong to $ID(\boxplus)\cap ID(\ast)$ as we prove in Theorems \ref{mix1} and \ref{mix2} below.

Theorem \ref{T1} has another consequence regarding the free divisibility indicator of Belinschi and Nica \cite{BN} (see Section \ref{prel} for a brief explanation on this indicator). Recently,  we were able to prove in \cite{AH2} that any Boolean stable law  $\mathbf{b}_\alpha^\rho$ has free divisibility indicator $0$ or $\infty$, depending on whether or not $\mathbf{b}_\alpha^\rho$ is in $ID(\boxplus)$. The first non trivial example of a measure with divisibility indicator $\infty$ was found from this observation: free and Boolean stable laws with parameter $1/2$. Theorem \ref{T1} broadly generalizes the result of \cite{AH2}. 

Apart from this introduction this note is organized as follows. Section \ref{prel} presents basic preliminaries needed in this paper. In Section \ref{S3} we consider the free infinite divisibility of Boolean stable laws and continuous Boolean convolutions of them. In Section \ref{S4}, we prove the classical infinite divisibility of some Boolean stable laws. Finally, Section \ref{S2} is on a reproducing property of Boolean stable laws with respect to the free multiplicative convolution. 
 
\section{Preliminaries}\label{prel}

\subsection*{Boolean convolution and Boolean stable laws}

We denote by $\mathcal{P}$ the set of Borel probability measures on $\mathbb{R}$. The upper half-plane and the lower half-plane are respectively denoted by $\mathbb{C}^+$ and $\mathbb{C}^-$. 
Let $G_\mu(z) = \int_{\mathbb{R}}\frac{\mu(dx)}{z-x}$ $(z \in \mathbb{C}^+)$ be the Cauchy transform of $\mu \in \mathcal{P}$ and 
$F_\mu(z)$ its reciprocal $\frac{1}{G_\mu(z)}$. The dilation operator $D_b$ $(b>0)$ is defined  on $\mathcal{P}$ so that 
$$
\int_{\mathbb{R}}f(x)D_b (\mu)(dx)=\int_{\mathbb{R}}f(bx)\mu(dx)
$$
for any bounded, continuous function $f$ on $\mathbb{R}$.

The \textit{Boolean convolution} $\uplus$ was defined in \cite{S-W} as follows.
Let $K_\mu(z)$ be the energy function  defined by 
\[
K_\mu(z)=z-F_\mu(z),~~z\in \mathbb{C}^+ 
\]
for $\mu \in \mathcal{P}$. 
The Boolean convolution $\mu \uplus \nu$ is characterized by $K_{\mu \uplus \nu} (z)=K_\mu(z)+K_\nu(z)$. 

A Boolean convolution semigroup $\{\mu^{\uplus t}\}_{t \geq 0}$ can always be defined for any probability measure $\mu$, corresponding to the energy transform $tK_\mu$. 



Boolean stable laws can be defined in terms of self-similarity. We include only non trivial measures, that is, measures which are different from a point measure $\delta_a$. 

\begin{definition} 1) A non trivial measure $\mu$ is said to be \textit{Boolean (or $\uplus$-) strictly stable} if $\mu \uplus \mu=D_{2^{1/\alpha}}(\mu)$ for some $\alpha$. We call $\alpha$ the index of stability. 
   
2)  A non trivial measure is said to be \textit{$\uplus$-stable} if  $\mu\uplus\delta_a$ is $\uplus$-strictly stable for some $a \in \mathbb{R}$.
\end{definition}
 
Boolean strictly stable distributions were characterized in \cite{S-W} in terms of the energy function. General $\uplus$-stable distributions can be classified as follows. 

\begin{theorem} 
Probability measures $\{{\normalfont\textbf{b}}_{\alpha}^\rho: \alpha \in (0,2], \rho \in [0,1]\}$ defined by the following energy functions are $\uplus$-stable: 

i) $K(z)= - e^{i\pi \rho\alpha}z^{-\alpha+1}$ for $\alpha\in(0,1)$ and $\rho \in [0,1]$; 

ii) $K(z)= -2\rho i +\frac{2(2\rho-1)}{\pi}\log z$ for $\alpha =1$ and $\rho \in [0,1]$; 

iii)  $K(z)= e^{i(\alpha-2)\rho\pi} z^{-\alpha+1}$ for $\alpha\in(1,2]$ and $\rho \in [0,1]$. 

The index of stability of ${\normalfont\textbf{b}}_{\alpha}^\rho$ is $\alpha$. 
For any $\uplus$-stable law $\mu$, there exist $a \in \mathbb{R}, b >0, \alpha \in(0,2], \rho\in [0,1]$ such that  $\mu= \delta_a \uplus D_b({\normalfont\textbf{b}}_{\alpha}^\rho)$. 
\end{theorem}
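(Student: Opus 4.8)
The plan is to work throughout with the energy function, using $K_{\mu\uplus\nu}=K_\mu+K_\nu$, $K_{\delta_a}=a$ and $K_{D_b\mu}(z)=bK_\mu(z/b)$, together with the analytic characterization of reciprocal Cauchy transforms: an analytic $F\colon\mathbb{C}^+\to\mathbb{C}^+$ is $F_\mu$ for some $\mu\in\mathcal{P}$ precisely when $F(z)/z\to1$ as $z\to\infty$ non-tangentially; equivalently, an analytic $K$ on $\mathbb{C}^+$ is the energy function of a probability measure if and only if $\operatorname{Im}K(z)\le0$ on $\mathbb{C}^+$ and $K(z)/z\to0$ non-tangentially. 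The classification of the $\uplus$-strictly stable laws is already available in \cite{S-W}; what is left is to exhibit the normalized representatives $\mathbf{b}_\alpha^\rho$ explicitly as honest probability measures, to check their self-similarity, and to feed in the translation $\delta_a$ for the general $\uplus$-stable case.

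The technical core is to verify that each $K$ in i)--iii) satisfies the criterion above. Analyticity is clear from the principal branches of $z^{1-\alpha}$ and $\log z$ on $\mathbb{C}^+$, and $K(z)/z$ is $O(|z|^{-\alpha})$ or $O(|z|^{-1}\log|z|)$, so it tends to $0$. The point is the sign condition: with $z=re^{i\theta}$, $\theta\in(0,\pi)$, in cases i) and iii) the quantity $\arg K(z)=\arg c+(1-\alpha)\theta$ is affine in $\theta$ and sweeps an interval of length $|1-\alpha|\pi$, while in case ii) one estimates $\operatorname{Im}K(z)=-2\rho+\tfrac{2}{\pi}(2\rho-1)\theta$ directly; for $\rho\in[0,1]$ one checks that the relevant quantity stays inside the region $\bigcup_k[(2k-1)\pi,2k\pi]$ on which $\sin\le0$ --- concretely $\arg K(z)\in(\pi,2\pi)$ in case i) and $\arg K(z)\in(-\pi,0)$ in case iii). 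Reading this computation in the other direction shows that the admissible leading coefficients are exactly those corresponding to $\rho\in[0,1]$, and that an interval of length $|1-\alpha|\pi$ can be accommodated in one such length-$\pi$ block only when $|1-\alpha|\le1$; together with the growth condition (which excludes $\alpha=0$) this is exactly why $\alpha$ runs over $(0,2]$.

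Self-similarity is then a short computation. For $\alpha\ne1$, $K(z)=cz^{1-\alpha}$ is homogeneous and $2K(z)=2^{1/\alpha}K(2^{-1/\alpha}z)$, i.e.\ $\mathbf{b}_\alpha^\rho\uplus\mathbf{b}_\alpha^\rho=D_{2^{1/\alpha}}(\mathbf{b}_\alpha^\rho)$; for $\alpha=1$ the logarithm contributes a real additive constant under dilation, so $\mathbf{b}_1^\rho\uplus\mathbf{b}_1^\rho=D_2(\mathbf{b}_1^\rho)\uplus\delta_c$ with $c=\tfrac{4}{\pi}(2\rho-1)\log2$, which vanishes exactly for $\rho=\tfrac12$ (where $\mathbf{b}_1^{1/2}$ is the standard Cauchy law, $K\equiv-i$). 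In every case $\mathbf{b}_\alpha^\rho$ is $\uplus$-stable, which is the first assertion. For the second, given any $\uplus$-stable $\mu$ choose $a$ with $\nu=\mu\uplus\delta_a$ $\uplus$-strictly stable of some index $\alpha$; by \cite{S-W} (equivalently, since the self-similarity relation combined with $\operatorname{Im}K_\nu\le0$ forces $K_\nu$ into the rigid form above) the energy function $K_\nu$ is $cz^{1-\alpha}$, the sign condition forces $\alpha\in(0,2]$ and puts $\arg c$ in the window identified in the previous paragraph, and writing $c=|c|e^{i\psi}$, absorbing $|c|^{1/\alpha}$ into a dilation $D_b$ and returning the shift $\delta_{-a}$ yields $\mu=\delta_{-a}\uplus D_b(\mathbf{b}_\alpha^\rho)$ for the $\rho\in[0,1]$ determined by $\psi$. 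I expect the main obstacle to be the bookkeeping in the second step --- keeping the branch of $z^{1-\alpha}$ and the resulting sweep of $\arg K(z)$ under control uniformly across $\alpha\lessgtr1$ and at the endpoint $\alpha=2$ --- together with the mild subtlety that the $\alpha=1$ laws with $\rho\ne\tfrac12$ are stable but not strictly stable, so the homogeneity argument there must be run with an additive correction.
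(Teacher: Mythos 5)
The paper offers no proof of this theorem: the strictly stable case is quoted from \cite{S-W} and the general classification is only asserted, so your write-up is compared against that implicit route. Your forward direction is correct and is exactly the standard one: the criterion that an analytic $K$ on $\mathbb{C}^+$ is an energy function if and only if $\mathrm{Im}\,K\le 0$ and $K(z)/z\to 0$ non-tangentially is valid, your argument computations for i)--iii) (the sweep $\arg c+(1-\alpha)\theta$ staying in a length-$\pi$ block where $\sin\le 0$, and $\mathrm{Im}\,K=-2\rho+\tfrac{2}{\pi}(2\rho-1)\theta\le 0$ at $\alpha=1$) are right, and so is the homogeneity computation $2K(z)=2^{1/\alpha}K(2^{-1/\alpha}z)$, with the additive constant $\tfrac{4}{\pi}(2\rho-1)\log 2$ appearing at $\alpha=1$.

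The genuine gap is in your converse. Your parenthetical claim that ``the self-similarity relation combined with $\mathrm{Im}\,K_\nu\le 0$ forces $K_\nu$ into the rigid form $cz^{1-\alpha}$'' is false if self-similarity means the single relation $\nu\uplus\nu=D_{2^{1/\alpha}}(\nu)$ used in the paper's definition of strict stability: the functional equation $2K(z)=2^{1/\alpha}K(2^{-1/\alpha}z)$ admits log-periodic solutions such as $K(z)=z^{1-\alpha}\bigl(c+\varepsilon\,e^{2\pi i\alpha \mathrm{Log}\,z/\log 2}\bigr)$, and for interior parameters (say $\alpha=\tfrac12$, $c=-e^{i\pi/4}$) and small $\varepsilon>0$ this still maps $\mathbb{C}^+$ into $\mathbb{C}^-\cup\mathbb{R}$ with $K(z)/z\to 0$, hence is the energy function of a probability measure satisfying the $n=2$ dilation relation but not of the form $\delta_a\uplus D_b(\mathbf{b}_\alpha^\rho)$. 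Rigidity therefore cannot come from the $n=2$ relation plus the sign condition; it requires the full stability hypothesis of \cite{S-W} (closure of the dilation--translation type class under $\uplus$, i.e.\ the relation for arbitrary pairs of dilations), and a self-contained converse must actually run that argument --- otherwise you are simply citing \cite{S-W}, as the paper does. Relatedly, at $\alpha=1$, $\rho\ne\tfrac12$ your own computation shows the additive constant cannot be removed by any Boolean shift ($h\gamma=2\gamma$ forces $h=2$ and then $\gamma\log 2=0$), so $\mathbf{b}_1^\rho\uplus\delta_a$ is never strictly stable; these laws are stable only in the broader ``type'' sense of \cite{S-W}, not literally in the sense of the paper's definition. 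This imprecision is inherited from the paper, but your sketch treats it as removable bookkeeping, which it is not.
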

As shown in \cite{AH2}, the transformation $\mu \mapsto \delta_a \uplus D_b(\mu)$ does not change the free infinite divisibility (free divisibility indicator, more strongly), so that 
the above cases i)--iii) are enough for our purpose. 

The parameter $\rho$ is called an \textit{asymmetry coefficient} of a $\uplus$-stable law. 
The measure ${\normalfont\textbf{b}}_{\alpha}^\rho$ is supported on $[0,\infty)$ if and only if $\rho =1$, $\alpha \in (0,1]$ and it is symmetric  if and only if $\rho=\frac{1}{2}$. 
 


\subsection*{Free additive convolution}
The \emph{free additive convolution} $\boxplus$ was introduced by Voiculescu in \cite{Voi86} for compactly supported measures on $\mathbb{R}$ and is defined as follows. 
If $X_{1},X_{2}$ are free random variables following probability distributions $\mu_1, \mu_2$ respectively, then the probability distribution of $X_1+X_2$ is denoted by $\mu_1 \boxplus \mu_2$ and is called the free additive convolution. The free additive convolution was later extended to all probability measures in \cite{Be-Vo} and it is characterized as follows. 

For any $\beta>0$, there exists $M>0$ such that the reciprocal Cauchy transform $F_\mu$ has a right inverse map $F^{-1}_\mu$ defined in 
$\Gamma_{\beta, M}:=\{z \in \mathbb{C}^+: \text{Im}\,z >M,~ \beta|\text{Re}\,z| <\text{Im}\,z\}$. 
Let $\phi_\mu(z)$ be the Voiculescu transform of $\mu$ defined by 
\[
\phi_\mu(z) = F_\mu^{-1}(z)-z,~~z\in\Gamma_{\beta,M}.  
\]
The free convolution $\mu \boxplus \nu$ of probability measures $\mu$ and $\nu$ is the unique probability measure such that $\phi_{\mu \boxplus \nu}(z) = \phi_\mu (z) + \phi_\nu(z)$ in a common domain.

\subsection*{Free infinite divisibility and divisibility indicator}\label{S9}
\begin{definition} A probability measure $\mu$ is said to be \textit{freely (or $\boxplus$-) infinitely divisible}  if for each $n\in\mathbb{N}$ there exist $\mu_n \in \mathcal{P}$ such that $\mu=\mu_n^{\boxplus n}.$ We will denote by $ID(\boxplus)$ the set of $\boxplus$-infinitely divisible measures. 
\end{definition}

It is known that a probability measure $\mu$ belongs to $ID(\boxplus)$ if and only if $\phi_\mu$ extends analytically to $\mathbb{C}^+$ with values in $\mathbb{C}^- \cup \mathbb{R}$. The free infinite divisibility of $\mu$ is also equivalent to the existence of probability measures $\mu^{\boxplus t}$ for $0 \leq t < \infty$ which satisfy 
$\phi_{\mu^{\boxplus t}}(z) = t\phi_\mu(z)$~\cite{Be-Vo},  while the partial semigroup $\{\mu^{\boxplus t}\}_{t\geq 1}$ always exists for any $\mu \in \mathcal{P}$~\cite{NS97}. 


A particularly important subset of   $ID(\boxplus)$ is the class of free regular measures.
\begin{definition}
\label{characterization regular} A probability measure $\mu $ is \textit{free regular} if $\mu ^{\boxplus t}$ exists as a probability measure on $[0,\infty)$ for all $t>0.$
\end{definition}
The importance of this class is based on the following properties: this class coincides with the distributions of free subordinators;  this class is closed under free multiplicative convolution~\cite{AHS}. Moreover, 
\begin{theorem}\label{closure}
Let $\mu $ be free regular and $\sigma$ be freely infinitely divisible. 
Then $\mu \boxtimes \sigma $ is freely infinitely divisible.
\end{theorem}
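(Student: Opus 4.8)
The plan is to verify the analytic criterion recalled above: $\mu\boxtimes\sigma$ lies in $ID(\boxplus)$ as soon as its Voiculescu transform $\phi_{\mu\boxtimes\sigma}$ extends analytically to all of $\mathbb{C}^+$ with values in $\mathbb{C}^-\cup\mathbb{R}$, and once this is known the semigroup $\{(\mu\boxtimes\sigma)^{\boxplus t}\}_{t\ge0}$ exists automatically. The input from the hypothesis on $\mu$ is the structure theory of free regular measures: by the free L\'evy--Khintchine representation together with the Bercovici--Pata bijection \cite{BePa}, $\mu$ is free regular exactly when $\mu\in ID(\boxplus)$, its semicircular component vanishes, its free L\'evy measure is carried by $(0,\infty)$, and its drift is nonnegative; equivalently $\mu$ is the distribution of a free subordinator, so that $\mu$ and all of its $\boxplus$-powers are supported on $[0,\infty)$.

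First I would use the subordination description of free multiplicative convolution: since $\mu$ is a nondegenerate measure on $[0,\infty)$, there is an analytic self-map $\omega$ of $\mathbb{C}\setminus[0,\infty)$, of Nevanlinna--Pick type, such that the $\eta$-transform of $\mu\boxtimes\sigma$ equals $\eta_\sigma\circ\omega$. Because $\sigma\in ID(\boxplus)$, the transform $\phi_\sigma$ already continues analytically to $\mathbb{C}^+$ with values in $\mathbb{C}^-\cup\mathbb{R}$; the plan is to feed this through the composition with $\omega$ and the elementary relations between $F_\mu^{-1}$, $G_\mu$ and the $\eta$-transform, in order to conclude that $\phi_{\mu\boxtimes\sigma}$ inherits the same extension, which finishes the proof. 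When $\sigma$ too is supported on $[0,\infty)$, the same conclusion can be reached working directly with the multiplicative transform $S$, via $S_{\mu\boxtimes\sigma}=S_\mu S_\sigma$ and the continuation properties that free regularity forces on $S_\mu$ after passing to $\log S$.

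The hard part will be the precise control of the subordination function $\omega$: one has to show that for a \emph{free regular} $\mu$ this function maps $\mathbb{C}^+$ into the half-plane compatible with the range condition on $\phi_\sigma$, and that its boundary behaviour does not spoil the required orientation of $\phi_{\mu\boxtimes\sigma}$. It is here that the full strength of free regularity is used --- not merely $\mu\in ID(\boxplus)$ with support in $[0,\infty)$, nor merely positivity of $\mu$ --- since the vanishing of the semicircular part and the nonnegativity of the drift are exactly what fix the sign of $\omega$. I would also point out why the obvious shortcuts fail and cannot be repaired: $\boxtimes$ is neither additive nor distributive over $\boxplus$, so one cannot approximate $\mu$ by finite free convolutions and reduce to the explicit case $\mathrm{MP}(\lambda)\boxtimes\sigma$ (a free compound Poisson, trivially in $ID(\boxplus)$); and no operator model works either, because conjugating a free L\'evy process by $a^{1/2}$, where $a$ realizes $\mu$, destroys the freeness of its increments. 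Hence the argument must stay analytic and hinges on the continuation properties of $\omega$ and $\phi_\sigma$.
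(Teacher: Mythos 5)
Your proposal is a programme rather than a proof: the decisive step is exactly the one you defer. You reduce the theorem to showing that the multiplicative subordination function $\omega$ (with $\eta_{\mu\boxtimes\sigma}=\eta_\sigma\circ\omega$) has mapping and boundary properties forcing $\phi_{\mu\boxtimes\sigma}(\mathbb{C}^+)\subset\mathbb{C}^-\cup\mathbb{R}$, and then write that ``the hard part will be the precise control of $\omega$'' without supplying any of it. Nothing in the proposal explains how the range condition on the \emph{additive} transform $\phi_{\mu\boxtimes\sigma}$ would follow from the \emph{multiplicative} subordination relation: $\phi_{\mu\boxtimes\sigma}$ is not a composition of $\phi_\sigma$ with $\omega$, no formula linking the two is offered, and ``feed this through the composition'' is precisely where all of the difficulty sits. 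There is also a problem already in the set-up: the subordination statement you invoke ($\omega$ an analytic self-map of $\mathbb{C}\setminus[0,\infty)$ with $\eta_{\mu\boxtimes\sigma}=\eta_\sigma\circ\omega$) is the standard one when \emph{both} measures are supported on $[0,\infty)$, whereas the theorem allows $\sigma$ to be any freely infinitely divisible measure on $\mathbb{R}$; your fallback via the $S$-transform likewise only addresses positive $\sigma$. So the argument has a genuine gap at its core.

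Two further remarks. First, the paper does not prove this statement at all; it quotes it from \cite{AHS}, so there is no internal proof to match. Second, your closing claim that ``the argument must stay analytic'' is incorrect: the identity the paper itself records from Proposition 3.5 of \cite{BN}, namely $(\mu\boxtimes\nu)^{\boxplus t}=D_{1/t}\left(\mu^{\boxplus t}\boxtimes\nu^{\boxplus t}\right)$ for $t\geq 1$ when one of the measures is supported on $[0,\infty)$, yields a soft proof. Free regularity gives $\mu^{\boxplus 1/n}$ as a probability measure on $[0,\infty)$ and free infinite divisibility gives $\sigma^{\boxplus 1/n}$; applying the identity to this pair with $t=n$ gives $D_{1/n}(\mu\boxtimes\sigma)=\left(\mu^{\boxplus 1/n}\boxtimes\sigma^{\boxplus 1/n}\right)^{\boxplus n}$, hence $\mu\boxtimes\sigma=\left(D_n\left(\mu^{\boxplus 1/n}\boxtimes\sigma^{\boxplus 1/n}\right)\right)^{\boxplus n}$, an $n$-th free convolution root for every $n$. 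This is where free regularity really enters (existence and positivity of all small $\boxplus$-powers of $\mu$, so that the $\boxtimes$ is defined and the dilation identity applies), not through sign control of a subordination function.
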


Recall that the semigroup $\{\mathbb{B}_t \}_{t \geq 0}$, introduced by Belinschi and Nica \cite{BN}, is defined to be 
\[
\mathbb{B}_t(\mu) = \Big(\mu^{\boxplus (1+t)}\Big) ^{\uplus\frac{1}{1+t}},~~\mu\in\mathcal{P}. 
\]
$\mathbb{B}_1$ coincides with the Bercovici-Pata bijection $\Lambda_B$ from the Boolean convolution to the free one. The reader is referred to \cite{BePa} for the definition of $\Lambda_B$. Let $\phi(\mu)$ denote the free divisibility indicator defined by 
\[
\phi(\mu):=\sup \{t \geq 0: \mu \in \mathbb{B}_t(\mathcal{P}) \}. 
\]
A probability measure $\mu$ is $\boxplus$-infinitely divisible if and only if $\phi(\mu) \geq 1$. Moreover $\mu^{\boxplus t}$ exists for $t \geq \max\{1-\phi(\mu),0 \}$.

The following property was proved in \cite{AH2}.
\begin{proposition}\label{prop08} Let $\mu$ be a probability measure on $\mathbb{R}$. Then $\phi(\mu^{\uplus t}) = \frac{1}{t}\phi(\mu)$ for $t > 0$. In particular, $\phi(\mu) = \sup\{ t >0: \mu^{\uplus t} \in ID(\boxplus)\}$. 
\end{proposition}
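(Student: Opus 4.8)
The plan is to reduce the proposition to the single intertwining identity
\[
\mathbb{B}_s\big(\mu^{\uplus t}\big)=\big(\mathbb{B}_{st}(\mu)\big)^{\uplus t},\qquad t>0,\ s\ge 0,
\]
between the Belinschi--Nica semigroup and Boolean convolution powers, and then to read off the statement about divisibility indicators. To establish the identity I would work with energy functions. Starting from $\mathbb{B}_r(\mu)=\big(\mu^{\boxplus(1+r)}\big)^{\uplus\frac{1}{1+r}}$ and using the elementary rules $K_{\nu^{\uplus c}}=cK_\nu$, $\phi_{\nu^{\boxplus c}}=c\phi_\nu$, $F_\nu=z-K_\nu$, $F_\nu^{-1}=z+\phi_\nu$ together with the universal relation $\phi_\nu(F_\nu(z))=K_\nu(z)$, one obtains the parametric identity $K_{\mathbb{B}_r(\mu)}\big(z+rK_\mu(z)\big)=K_\mu(z)$, valid on a truncated Stolz angle near infinity. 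Applying this with $\mu^{\uplus t}$ (whose energy function is $tK_\mu$) in place of $\mu$ yields $K_{\mathbb{B}_s(\mu^{\uplus t})}\big(z+stK_\mu(z)\big)=tK_\mu(z)$; applying it with $r=st$ and then multiplying energy functions by $t$ yields, equally, $K_{(\mathbb{B}_{st}(\mu))^{\uplus t}}\big(z+stK_\mu(z)\big)=tK_\mu(z)$. Hence the energy functions of the two sides agree along the curve $z\mapsto z+stK_\mu(z)$, which fills an open subset of $\mathbb{C}^+$; since energy functions are analytic they then coincide, and the measures are equal.

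Granting the intertwining identity, the rest is soft. I would first record two standard facts: the Boolean power $\nu^{\uplus r}$ is a probability measure for every $\nu$ and every $r\ge 0$, and the map $\nu\mapsto\nu^{\uplus r}$ is injective for $r>0$ (from $rK_\nu=rK_{\nu'}$ one recovers $F_\nu=F_{\nu'}$). These, combined with the identity, give the equivalence
\[
\mu^{\uplus t}\in\mathbb{B}_s(\mathcal{P})\iff \mu\in\mathbb{B}_{st}(\mathcal{P}),\qquad t>0,\ s\ge 0 .
\]
Indeed, if $\mu=\mathbb{B}_{st}(\nu)$ then $\mu^{\uplus t}=\big(\mathbb{B}_{st}(\nu)\big)^{\uplus t}=\mathbb{B}_s(\nu^{\uplus t})\in\mathbb{B}_s(\mathcal{P})$; conversely, if $\mu^{\uplus t}=\mathbb{B}_s(\lambda)$, put $\kappa=\lambda^{\uplus 1/t}$, so that $\big(\mathbb{B}_{st}(\kappa)\big)^{\uplus t}=\mathbb{B}_s(\kappa^{\uplus t})=\mathbb{B}_s(\lambda)=\mu^{\uplus t}$, whence $\mathbb{B}_{st}(\kappa)=\mu$ by injectivity of the $t$-th Boolean power.

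Finally, from the definition $\phi(\nu)=\sup\{s\ge 0:\nu\in\mathbb{B}_s(\mathcal{P})\}$ and the equivalence above, the substitution $u=st$ gives
\[
\phi\big(\mu^{\uplus t}\big)=\sup\{s\ge 0:\mu\in\mathbb{B}_{st}(\mathcal{P})\}=\frac1t\sup\{u\ge 0:\mu\in\mathbb{B}_u(\mathcal{P})\}=\frac1t\phi(\mu).
\]
For the concluding remark, recall that $\nu\in ID(\boxplus)$ if and only if $\phi(\nu)\ge 1$; hence $\mu^{\uplus t}\in ID(\boxplus)$ if and only if $\tfrac1t\phi(\mu)\ge 1$, i.e.\ $t\le\phi(\mu)$, so that $\sup\{t>0:\mu^{\uplus t}\in ID(\boxplus)\}=\phi(\mu)$ (with the convention $\sup\emptyset=0$ when $\phi(\mu)=0$). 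I expect the intertwining identity --- really the verification that all the transforms and their compositional inverses are defined, and that the relevant curve is non-degenerate, on a common truncated cone --- to be the only genuine obstacle; everything downstream of it is elementary.
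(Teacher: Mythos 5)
Your proof is correct. The paper itself gives no argument for this proposition --- it is quoted from \cite{AH2} --- and the intertwining identity $\mathbb{B}_s\big(\mu^{\uplus t}\big)=\big(\mathbb{B}_{st}(\mu)\big)^{\uplus t}$ that you derive is precisely the commutation relation between the Belinschi--Nica semigroup and Boolean powers on which the cited proof rests, so your route is essentially a self-contained reconstruction of it. The computational core checks out: from $K_{\nu^{\uplus c}}=cK_\nu$, $\phi_{\nu^{\boxplus c}}=c\phi_\nu$ (for $c\ge 1$, which is all you need since the exponent is $1+r$) and $\phi_\nu\circ F_\nu=K_\nu$ one indeed gets $K_{\mathbb{B}_r(\mu)}\big(z+rK_\mu(z)\big)=K_\mu(z)$ on a truncated cone, the domain questions being the standard Bercovici--Voiculescu facts about $F_\mu^{-1}$ and $\Gamma_{\beta,M}$, and the identity-theorem step is legitimate because $z\mapsto z+stK_\mu(z)$ is analytic, nonconstant, and its image of an open truncated cone is open. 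The downstream part --- injectivity of $\nu\mapsto\nu^{\uplus t}$, the equivalence $\mu^{\uplus t}\in\mathbb{B}_s(\mathcal{P})\iff\mu\in\mathbb{B}_{st}(\mathcal{P})$, and reading off both the formula $\phi(\mu^{\uplus t})=\tfrac1t\phi(\mu)$ and the ``in particular'' statement via $\nu\in ID(\boxplus)\iff\phi(\nu)\ge 1$ (with $\sup\emptyset=0$) --- is elementary and handled correctly.
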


As a consequence, any Boolean stable law $\mathbf{b}_\alpha^\rho$ has free divisibility indicator $0$ or $\infty$, depending on whether or not $\mathbf{b}_\alpha^\rho$ is in $ID(\boxplus)$.   From this observation the first non-trivial example of measures with  $\phi(\mu) =\infty$ was found.  
\begin{proposition}\cite{AH2} \label{T2}
The measure $\mathbf{b}_{1/2}^{\rho}$ is $\boxplus$-infinitely divisible for any $\rho \in [0,1]$. In particular, $\phi(\mathbf{b}_{1/2}^{\rho})=\infty$. Moreover, $\mathbf{b}^1_{1/2}$ is free regular. 
\end{proposition}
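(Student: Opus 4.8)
The plan is to reduce everything to an explicit formula for the Voiculescu transform of $\mathbf{b}_{1/2}^\rho$. From case i) of the classification theorem with $\alpha=\tfrac12$ one has $K_{\mathbf{b}_{1/2}^\rho}(z)=-c\sqrt z$ and hence $F_{\mathbf{b}_{1/2}^\rho}(z)=z+c\sqrt z$, where $c:=e^{i\pi\rho/2}$ and $\sqrt{\cdot}$ is the principal branch. To invert $F_{\mathbf{b}_{1/2}^\rho}$ I would substitute $s=\sqrt z$, solve the quadratic $s^2+cs-w=0$ keeping the root $s=\tfrac12\big(-c+\sqrt{c^2+4w}\big)$ asymptotic to $\sqrt w$, and use $s^2=w-cs$ to get
\[
\phi_{\mathbf{b}_{1/2}^\rho}(w)=F_{\mathbf{b}_{1/2}^\rho}^{-1}(w)-w=-cs=\tfrac12 c^2-\tfrac12\,c\sqrt{c^2+4w},
\]
still with the principal square root; the identity $c^2+4w=(2\sqrt z+c)^2$ for $w=F_{\mathbf{b}_{1/2}^\rho}(z)$ confirms that this inverts $F_{\mathbf{b}_{1/2}^\rho}$. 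The right-hand side is analytic off the branch cut $\{w:c^2+4w\in(-\infty,0]\}$, which is a horizontal ray at height $-\tfrac14\sin(\pi\rho)\le 0$, hence disjoint from $\mathbb{C}^+$; so $\phi_{\mathbf{b}_{1/2}^\rho}$ extends analytically to $\mathbb{C}^+$.

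By the criterion recalled in Section~\ref{prel} it then suffices to prove $\operatorname{Im}\phi_{\mathbf{b}_{1/2}^\rho}(w)\le 0$ for $w\in\mathbb{C}^+$, and this estimate is the heart of the matter. Writing $\zeta:=c^2+4w$, as $w$ runs over $\mathbb{C}^+$ the point $\zeta$ runs over the half-plane $\{\operatorname{Im}\zeta>\sin(\pi\rho)\}\subseteq\mathbb{C}^+$, so $\sqrt\zeta=p+iq$ lies in the open first quadrant ($p,q>0$) with $\operatorname{Im}\zeta=2pq>\sin(\pi\rho)$. Expanding $c\sqrt\zeta=\big(\cos\tfrac{\pi\rho}{2}+i\sin\tfrac{\pi\rho}{2}\big)(p+iq)$ gives $\operatorname{Im}\big(c\sqrt\zeta\big)=p\sin\tfrac{\pi\rho}{2}+q\cos\tfrac{\pi\rho}{2}$, hence
\[
\operatorname{Im}\phi_{\mathbf{b}_{1/2}^\rho}(w)=\tfrac12\sin(\pi\rho)-\tfrac12\Big(p\sin\tfrac{\pi\rho}{2}+q\cos\tfrac{\pi\rho}{2}\Big).
\]
Since $\rho\in[0,1]$ makes $\sin\tfrac{\pi\rho}{2}\ge0$ and $\cos\tfrac{\pi\rho}{2}\ge0$, the AM--GM inequality together with $pq>\sin\tfrac{\pi\rho}{2}\cos\tfrac{\pi\rho}{2}\ge0$ gives $p\sin\tfrac{\pi\rho}{2}+q\cos\tfrac{\pi\rho}{2}\ge 2\sqrt{pq\sin\tfrac{\pi\rho}{2}\cos\tfrac{\pi\rho}{2}}\ge 2\sin\tfrac{\pi\rho}{2}\cos\tfrac{\pi\rho}{2}=\sin(\pi\rho)$, so $\operatorname{Im}\phi_{\mathbf{b}_{1/2}^\rho}(w)\le 0$ and therefore $\mathbf{b}_{1/2}^\rho\in ID(\boxplus)$. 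I expect this elementary two-line estimate to be the only real obstacle; everything else is branch bookkeeping.

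Given $\mathbf{b}_{1/2}^\rho\in ID(\boxplus)$, the equality $\phi(\mathbf{b}_{1/2}^\rho)=\infty$ is immediate from the dichotomy noted after Proposition~\ref{prop08} (a Boolean stable law has divisibility indicator $0$ or $\infty$, and an $ID(\boxplus)$ measure has indicator $\ge1$); alternatively, $K_{(\mathbf{b}_{1/2}^\rho)^{\uplus t}}=tK_{\mathbf{b}_{1/2}^\rho}$ shows $(\mathbf{b}_{1/2}^\rho)^{\uplus t}=D_{t^2}(\mathbf{b}_{1/2}^\rho)\in ID(\boxplus)$ for all $t>0$, and Proposition~\ref{prop08} yields $\phi(\mathbf{b}_{1/2}^\rho)=\sup\{t>0:(\mathbf{b}_{1/2}^\rho)^{\uplus t}\in ID(\boxplus)\}=\infty$. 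For the free regularity of $\mathbf{b}_{1/2}^1$ I would put $\rho=1$ ($c=i$, $c^2=-1$); on $\mathbb{C}^+$ the formula above rewrites as $\phi_{\mathbf{b}_{1/2}^1}(w)=-\tfrac12+\tfrac12\sqrt{1-4w}$ with the principal square root, a function now analytic on $\mathbb{C}\setminus[\tfrac14,\infty)\supseteq\mathbb{C}\setminus[0,\infty)$. Since $\mathbf{b}_{1/2}^1\in ID(\boxplus)$, $(\mathbf{b}_{1/2}^1)^{\boxplus t}$ exists for every $t>0$ with $\phi_{(\mathbf{b}_{1/2}^1)^{\boxplus t}}=t\,\phi_{\mathbf{b}_{1/2}^1}$; solving the resulting quadratic gives $F_{(\mathbf{b}_{1/2}^1)^{\boxplus t}}(w)=w+\tfrac{t-t^2}{2}-\tfrac t2\sqrt{(t-1)^2-4w}$, analytic away from the cut $[(t-1)^2/4,\infty)\subseteq[0,\infty)$. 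It then remains to check that $F_{(\mathbf{b}_{1/2}^1)^{\boxplus t}}$ has no zeros on $\mathbb{C}\setminus[0,\infty)$, so that $G_{(\mathbf{b}_{1/2}^1)^{\boxplus t}}=1/F_{(\mathbf{b}_{1/2}^1)^{\boxplus t}}$ is analytic there and $(\mathbf{b}_{1/2}^1)^{\boxplus t}$ is supported in $[0,\infty)$ (with an atom at $0$ of mass $1-t$ when $0<t<1$, since there $F_{(\mathbf{b}_{1/2}^1)^{\boxplus t}}(0)=0$); by Definition~\ref{characterization regular} this is exactly free regularity. A shortcut avoiding the $\boxplus^t$ computation: $\mathbf{b}_{1/2}^1$ has density $\tfrac1{\pi\sqrt x\,(1+x)}$ on $(0,\infty)$, a product of the completely monotone functions $x^{-1/2}$ and $(1+x)^{-1}$ and hence completely monotone, and a probability measure on $[0,\infty)$ with completely monotone density is free regular. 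The delicate point in this last part is not the algebra producing $F_{(\mathbf{b}_{1/2}^1)^{\boxplus t}}$ but the verification that it has the right zero-free and endpoint behaviour to be a reciprocal Cauchy transform supported in $[0,\infty)$.
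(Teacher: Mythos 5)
Your proposal is correct in substance, but it takes a genuinely different route from the paper: Proposition \ref{T2} is quoted here from \cite{AH2} without proof, and the in-paper technique that covers it is the univalence argument of Proposition \ref{prop90} (showing $\mathbf{b}_\alpha^\rho\in\mathcal{UI}$ for $\alpha<\tfrac12$ and reaching $\alpha=\tfrac12$ as a weak limit), whereas you exploit the fact that for $\alpha=\tfrac12$ the equation $w=z+c\sqrt z$ is quadratic in $\sqrt z$, so $\phi_{\mathbf{b}_{1/2}^\rho}$ has the closed form $\tfrac12 c^2-\tfrac12 c\sqrt{c^2+4w}$ and the criterion ``$\phi_\mu$ extends to $\mathbb{C}^+$ with values in $\mathbb{C}^-\cup\mathbb{R}$'' can be checked by hand. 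Your branch bookkeeping is sound (for $z\in\mathbb{C}^+$ and $\rho\in[0,1]$ the number $2\sqrt z+c$ has argument in $[0,\pi/2]$, so $\sqrt{c^2+4F(z)}=2\sqrt z+c$ and your formula really is $F^{-1}-\mathrm{id}$ on the cone where $\phi_\mu$ lives), and the AM--GM estimate $p\sin\tfrac{\pi\rho}{2}+q\cos\tfrac{\pi\rho}{2}\ge\sin(\pi\rho)$ under $2pq>\sin(\pi\rho)$ is correct, so the free infinite divisibility part is complete and arguably more elementary than the univalence proof, at the price of working only for $\alpha=\tfrac12$ (the paper's method covers all $\alpha\le\tfrac12$ uniformly). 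The indicator claim is handled as in the paper, via Proposition \ref{prop08} together with $(\mathbf{b}_{1/2}^\rho)^{\uplus t}=D_{t^2}(\mathbf{b}_{1/2}^\rho)$, which you verify correctly. The one caveat is the free regularity of $\mathbf{b}^1_{1/2}$: your primary route through the explicit formula for $F_{(\mathbf{b}^1_{1/2})^{\boxplus t}}$ is left with the zero-free/support verification admittedly unfinished, and your shortcut hinges on the statement that a probability measure on $[0,\infty)$ with completely monotone density is free regular --- this is a genuine theorem rather than a routine remark (it is exactly the result, Theorem 13 of \cite{AHS}, that the present paper invokes for the same purpose in Theorem \ref{mix1}), so it should be cited or proved; with that citation, all three assertions of the proposition are established by your argument.
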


We will generalize this proposition in the following sections.

\section{Free infinite divisibility of Boolean stable laws}\label{S3}

In this section we completely solve the problem of the free infinite divisibility of the Boolean stable law, that is, we prove Theorem \ref{T1}.  It suffices to consider $\textbf{b}_\alpha^\rho$ because the free divisibility indicator and hence free infinite divisibility is invariant with respect to the dilation and Boolean convolution with a point measure (see Proposition 3.7 of \cite{AH2}).    

To prove the free infinite divisibility of some Boolean stable laws, the following subclass of $\boxplus$-infinitely divisible distributions will be useful. 
\begin{definition}
A probability measure $\mu$ is said to be in class $\mathcal{UI}$ if $F_\mu$ is univalent in $\mathbb{C}^+$ and, moreover, $F_\mu^{-1}$ has an analytic continuation from $F_\mu(\mathbb{C}^+)$ to $\mathbb{C}^+$ as a univalent function. 
\end{definition}
The following property was used in \cite{BBLS}. See also \cite{AH1} for applications. 
 
\begin{proposition}\label{lem1}
$\mu \in \mathcal{UI}$ implies that $\mu$ is $\boxplus$-infinitely divisible. Moreover, $\mathcal{UI}$ is closed with respect to the weak convergence. 
\end{proposition}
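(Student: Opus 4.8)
The plan is to prove the two assertions in turn, using throughout the characterization recalled above: $\mu\in ID(\boxplus)$ iff $\phi_\mu$ extends analytically to $\mathbb{C}^+$ with values in $\mathbb{C}^-\cup\mathbb{R}$.

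For the first assertion, let $\mu\in\mathcal{UI}$ and let $H\colon\mathbb{C}^+\to\mathbb{C}$ be the univalent analytic continuation of $F_\mu^{-1}$ from $F_\mu(\mathbb{C}^+)$ granted by the definition. Since $\phi_\mu(z)=F_\mu^{-1}(z)-z$, the map $z\mapsto H(z)-z$ is the analytic continuation of $\phi_\mu$ to $\mathbb{C}^+$, so it only remains to check that its imaginary part is nonpositive. The key step is the identity
\[
\{z\in\mathbb{C}^+:\ \mathrm{Im}\,H(z)>0\}=F_\mu(\mathbb{C}^+).
\]
Here $\supseteq$ is immediate because $H$ restricts to $F_\mu^{-1}$ on $F_\mu(\mathbb{C}^+)$, which is $\mathbb{C}^+$-valued; for $\subseteq$, $F_\mu$ maps $\mathbb{C}^+$ bijectively onto the open set $F_\mu(\mathbb{C}^+)\subseteq\mathbb{C}^+$, so $F_\mu^{-1}$ is a bijection $F_\mu(\mathbb{C}^+)\to\mathbb{C}^+$; thus if $z\in\mathbb{C}^+$ has $H(z)\in\mathbb{C}^+$, there is $z_0\in F_\mu(\mathbb{C}^+)$ with $H(z_0)=H(z)$, and univalence of $H$ forces $z=z_0$. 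Granting the identity: on $F_\mu(\mathbb{C}^+)$, writing $z=F_\mu(w)$ with $w\in\mathbb{C}^+$ and using the standard inequality $\mathrm{Im}\,F_\mu(w)\ge\mathrm{Im}\,w$ for reciprocal Cauchy transforms (from the Nevanlinna representation), one gets $\mathrm{Im}(H(z)-z)=\mathrm{Im}\,w-\mathrm{Im}\,F_\mu(w)\le0$; on $\mathbb{C}^+\setminus F_\mu(\mathbb{C}^+)$ the identity gives $\mathrm{Im}\,H(z)\le0<\mathrm{Im}\,z$, so again $\mathrm{Im}(H(z)-z)<0$. Hence $\mathrm{Im}\,\phi_\mu\le0$ on $\mathbb{C}^+$ and $\mu\in ID(\boxplus)$.

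For the closedness, take $\mu_n\in\mathcal{UI}$ with $\mu_n\to\mu$ weakly. Weak convergence gives $G_{\mu_n}\to G_\mu$, hence $F_{\mu_n}\to F_\mu$, locally uniformly on $\mathbb{C}^+$, and since $F_\mu(z)\sim z$ at infinity $F_\mu$ is non-constant, so Hurwitz's theorem shows $F_\mu$ is univalent on $\mathbb{C}^+$. To produce the univalent continuation of $F_\mu^{-1}$: by the first assertion each $\mu_n\in ID(\boxplus)$, so $\phi_{\mu_n}$ is defined on all of $\mathbb{C}^+$ with $\mathrm{Im}\,\phi_{\mu_n}\le0$, while the Bercovici--Voiculescu continuity theorem \cite{Be-Vo} gives $\phi_{\mu_n}\to\phi_\mu$ locally uniformly on some truncated cone $\Gamma_{\beta,M}$. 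The functions $z\mapsto i-\phi_{\mu_n}(z)$ take values in the half-plane $\{\zeta:\mathrm{Im}\,\zeta\ge1\}$, hence form a normal family by Montel's theorem; any locally uniform subsequential limit is finite (it coincides with $i-\phi_\mu$ on $\Gamma_{\beta,M}$), maps into $\{\zeta:\mathrm{Im}\,\zeta\ge1\}$, and by the identity theorem equals $i-\phi_\mu$ throughout $\mathbb{C}^+$. So $\phi_{\mu_n}\to\phi_\mu$ locally uniformly on $\mathbb{C}^+$, whence $H_n:=\mathrm{id}+\phi_{\mu_n}\to H:=\mathrm{id}+\phi_\mu$ locally uniformly; $H$ is non-constant, hence univalent by Hurwitz, and letting $n\to\infty$ in $F_{\mu_n}(H_n(z))=z$ on $\Gamma_{\beta,M}$ and applying the identity theorem yields $H=F_\mu^{-1}$ on $F_\mu(\mathbb{C}^+)$. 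Therefore $\mu\in\mathcal{UI}$.

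The Hurwitz arguments and the inequality $\mathrm{Im}\,F_\mu\ge\mathrm{Im}$ are routine; the real content is the displayed set identity in the first part and, in the second, upgrading the convergence of $\phi_{\mu_n}$ from a cone to all of $\mathbb{C}^+$ — which crucially uses that $\mu_n\in\mathcal{UI}$ (through free infinite divisibility) and is where I expect the main difficulty to lie.
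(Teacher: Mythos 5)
Your proof is correct, and the argument is sound at every step: the set identity $\{z\in\mathbb{C}^+:\mathrm{Im}\,H(z)>0\}=F_\mu(\mathbb{C}^+)$ obtained from the univalence of $H$ cleanly yields $\mathrm{Im}\,\phi_\mu\le 0$ on both $F_\mu(\mathbb{C}^+)$ and its complement, and the closure argument (Hurwitz for univalence, plus upgrading the Bercovici--Voiculescu convergence of $\phi_{\mu_n}$ from a truncated cone to all of $\mathbb{C}^+$ via normal families and the identity theorem) is complete. Note that the paper itself gives no proof of this proposition but defers to \cite{BBLS} and \cite{AH1}; your argument is essentially the standard one used there, so there is nothing further to flag.
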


We now go on to the main results. 

\begin{proposition}\label{prop90}
Let  $0<\alpha\leq\frac{1}{2}$, then $\mathbf{b}_\alpha^\rho$ belongs to $\mathcal{UI}$ for any $\rho \in [0,1]$. 
\end{proposition}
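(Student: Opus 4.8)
The plan is to show directly that $\mathbf{b}_\alpha^\rho \in \mathcal{UI}$ for $0<\alpha\le \tfrac12$ by exhibiting $F_{\mathbf{b}_\alpha^\rho}$ and its inverse explicitly. From the energy function in case i), $F_{\mathbf{b}_\alpha^\rho}(z) = z - K(z) = z + e^{i\pi\rho\alpha} z^{1-\alpha}$ on $\mathbb{C}^+$. The first step is to record that $F_\mu^{-1}(w)$ is obtained by solving $w = z + e^{i\pi\rho\alpha} z^{1-\alpha}$; equivalently, writing $\psi(z) := z + e^{i\pi\rho\alpha}z^{1-\alpha}$, one wants to see that $\psi$ is univalent on $\mathbb{C}^+$ and that its inverse extends from $\psi(\mathbb{C}^+)$ univalently to all of $\mathbb{C}^+$. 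It is often cleaner to work with the inverse directly: the natural candidate for the analytic continuation of $F_\mu^{-1}$ to $\mathbb{C}^+$ is the map $w \mapsto w + \phi_\mu(w)$, where $\phi_\mu$ is the Voiculescu transform, so I would first compute $\phi_{\mathbf{b}_\alpha^\rho}$ by inverting $\psi$ asymptotically and guessing a closed form, then verify it solves the functional equation.

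Second, I would establish univalence of $\psi$ on $\mathbb{C}^+$. Here the key structural fact is that $\psi'(z) = 1 + (1-\alpha)e^{i\pi\rho\alpha} z^{-\alpha}$, and one should check $\psi'$ does not vanish on $\mathbb{C}^+$ and, more, that $\psi$ is actually injective there. Since $z^{-\alpha}$ maps $\mathbb{C}^+$ onto the sector $\{\arg \zeta \in (-\alpha\pi, 0)\}$ and $0<\alpha\le\tfrac12$ keeps this sector within a half-plane, the term $(1-\alpha)e^{i\pi\rho\alpha}z^{-\alpha}$ stays in a half-plane not containing $-1$, giving $\mathrm{Re}\big(e^{-i\theta}\psi'(z)\big)>0$ for a suitable fixed $\theta$; by the Noshiro–Warschawski / Wolff criterion this yields univalence of $\psi$ on the convex domain $\mathbb{C}^+$. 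The restriction $\alpha\le\tfrac12$ is exactly what makes the relevant sector small enough, which is why the statement fails beyond $\tfrac12$ (in general) — this is the place where the hypothesis is genuinely used.

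Third, I would analyze the inverse branch. Once $\psi$ is univalent, $F_\mu^{-1}$ exists on $\Omega := \psi(\mathbb{C}^+)$; I need it to extend univalently to all of $\mathbb{C}^+$. The clean way is to produce the continuation by an explicit formula for $\phi_\mu$ and then check (a) it is analytic on $\mathbb{C}^+$, (b) $\mathrm{Im}\,\phi_\mu \le 0$ there — which simultaneously reconfirms $\boxplus$-infinite divisibility via the Voiculescu-transform criterion quoted in the excerpt — and (c) the map $w\mapsto w+\phi_\mu(w)$ is univalent on $\mathbb{C}^+$, again via a Noshiro–Warschawski-type estimate on its derivative $1 + \phi_\mu'(w)$. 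Steps (b) and (c) should both come down to the same kind of sector bookkeeping as in the second step, now for the exponent $-\alpha/(1-\alpha)$ or similar arising from inversion; one checks that for $\alpha\le\tfrac12$ this exponent has absolute value $\le 1$, keeping all images in half-planes.

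The main obstacle I expect is not any single inequality but pinning down the correct closed form (and the correct branch of the fractional power) for $\phi_{\mathbf{b}_\alpha^\rho}$, and then verifying the univalence of $w\mapsto w+\phi_\mu(w)$ on the \emph{whole} upper half-plane rather than just on $\Omega$ — the derivative estimate must be uniform, with the argument of the fractional-power term controlled globally. Once the sector arithmetic is set up carefully with the hypothesis $0<\alpha\le\tfrac12$, the three verifications are routine; the invariance under dilation and Boolean shift noted in the excerpt then lets us restrict to $\mathbf{b}_\alpha^\rho$ with no loss, and Proposition~\ref{lem1} upgrades membership in $\mathcal{UI}$ to $\boxplus$-infinite divisibility.
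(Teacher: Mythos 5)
There is a genuine gap, and it sits exactly at the point your plan defers to ``the main obstacle.'' Your second step (Noshiro--Warschawski on the convex domain $\mathbb{C}^+$, using that $\arg\bigl(e^{i\pi\rho\alpha}z^{-\alpha}\bigr)$ stays in $(-\tfrac{\pi}{2},\tfrac{\pi}{2})$ when $\alpha\le\tfrac12$) does prove that $F(z)=z+e^{i\pi\rho\alpha}z^{1-\alpha}$ is univalent on $\mathbb{C}^+$, but that is the easy half of membership in $\mathcal{UI}$. The hard half is the univalent continuation of $F^{-1}$ from $F(\mathbb{C}^+)$ to all of $\mathbb{C}^+$, and here your mechanism is to ``pin down a closed form for $\phi_{\mathbf{b}_\alpha^\rho}$'' and run a derivative estimate on $w\mapsto w+\phi_\mu(w)$. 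No such closed form exists: $w=z+e^{i\pi\rho\alpha}z^{1-\alpha}$ is not algebraically invertible for general $\alpha\in(0,\tfrac12)$ (only asymptotically is $\phi_\mu(w)\approx -e^{i\pi\rho\alpha}w^{1-\alpha}$, and the true inverse is not a power function, so the proposed ``sector bookkeeping for the exponent $-\alpha/(1-\alpha)$'' has nothing concrete to act on). Without a formula, your steps (a)--(c) cannot even be set up, and nothing in the proposal explains how $F^{-1}$ is continued across the portion of $\partial F(\mathbb{C}^+)$ that lies strictly inside $\mathbb{C}^+$ (note $F(r)=r+r^{1-\alpha}e^{i\pi\rho\alpha}$ for $r>0$, resp.\ $F(-r)$ when $\rho=0$, lands in $\mathbb{C}^+$, so $F(\mathbb{C}^+)$ genuinely fails to cover $\mathbb{C}^+$ and a continuation across that arc is unavoidable).

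The idea that is missing --- and that the paper uses --- is to continue $F$ itself, not its inverse: $F$ extends analytically to the larger sector $\{\arg z\in(\theta_1,\theta_2)\}$ with $\theta_1=-\tfrac{\pi\rho\alpha}{1-\alpha}\le 0$ and $\theta_2=\tfrac{\pi-\pi\rho\alpha}{1-\alpha}\ge\pi$; one shows $F$ is injective on the two boundary rays, maps them into $\mathbb{C}^-\cup\mathbb{R}$, and then, since this sector has opening $\tfrac{\pi}{1-\alpha}>\pi$ and is not convex, one cannot use a Noshiro--Warschawski estimate there but instead a boundary-injectivity/argument-principle result (Proposition \ref{prop91}) together with $F(z)=z(1+o(1))$ at infinity to conclude that $F$ is univalent on the whole sector and that its image contains $\mathbb{C}^+$. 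Restricting $(F|_{\text{sector}})^{-1}$ to $\mathbb{C}^+$ then provides the required univalent continuation of $F^{-1}$, with no explicit formula ever needed. Your hypothesis-tracking for $\alpha\le\tfrac12$ (keeping $e^{i\pi\rho\alpha}z^{-\alpha}$ in a half-plane) is the right kind of computation, but as written the proposal proves only univalence of $F_\mu$ on $\mathbb{C}^+$ and leaves the defining continuation property of $\mathcal{UI}$ unestablished.
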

In the proof, the following fact is useful. 
\begin{proposition}\label{prop91}
Let $\gamma \subset \mathbb{C}$ be a Jordan curve and $J_\gamma$ be the bounded open set surrounded by $\gamma$. Let $f: J_\gamma \to \mathbb{C}$ be an analytic map which extends to a continuous map from $\overline{J_\gamma}$ to $\mathbb{C}$. If $f$ is injective on $\gamma$, then $f$ is injective in $\overline{J_\gamma}$ and $f(J_\gamma)$ is a bounded Jordan domain surrounded by the Jordan curve $f(\gamma)$. 
\end{proposition}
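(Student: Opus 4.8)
The plan is to deduce Proposition~\ref{prop91} from the argument principle together with the Jordan curve theorem. Note first that $f$ cannot be constant (a Jordan curve has more than one point, so $f|_\gamma$ could not be injective), hence the open mapping theorem applies to $f$ on $J_\gamma$. Next, since $\gamma$ is homeomorphic to a circle and $f|_\gamma$ is a continuous injection of this compact set into the Hausdorff space $\mathbb{C}$, the map $f|_\gamma$ is a homeomorphism onto its image; thus $f(\gamma)$ is again a Jordan curve, and by the Jordan curve theorem $\mathbb{C}\setminus f(\gamma)$ splits into a bounded component $\Omega$ and an unbounded component $U$, both open, with common boundary $f(\gamma)$.

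For $w\notin f(\gamma)$ let $N(w)$ be the number of zeros of $f-w$ in $J_\gamma$ counted with multiplicity. The argument principle gives $N(w)=n(f\circ\gamma;w)$, the winding number of the closed curve $f\circ\gamma$ about $w$. (Strictly, the argument principle is applied to $f$, which is holomorphic in $J_\gamma$ and merely continuous on $\overline{J_\gamma}$; this is justified by exhausting $J_\gamma$ from the inside by Jordan domains on which $f$ is holomorphic up to the boundary and passing to the limit, or by using the homotopy-invariant definition of the winding number, which also disposes of any rectifiability concern about $\gamma$.) Since $N(w)$ is a nonnegative integer and the winding number is constant on each component of $\mathbb{C}\setminus f(\gamma)$, we get $N\equiv 0$ on $U$ and $N\equiv k$ on $\Omega$ for some integer $k$. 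Thus $f(J_\gamma)\cap U=\emptyset$; combined with the open mapping theorem (a neighbourhood of any point of $f(\gamma)$ meets $U$) this forces $f(J_\gamma)\subseteq\Omega$. As $f(J_\gamma)$ is then a nonempty open subset of $\Omega$ we obtain $k\geq 1$, and the classical fact that the winding number of a Jordan curve about any of its interior points equals $\pm1$ yields $k=1$.

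It follows that every $w\in\Omega$ has exactly one preimage in $J_\gamma$, with multiplicity one. Hence $f$ is injective on $J_\gamma$, $f'$ does not vanish there, and $f(J_\gamma)=\Omega$, a bounded Jordan domain bounded by $f(\gamma)$. Finally $f$ is injective on all of $\overline{J_\gamma}$: it is injective on $\gamma$ by hypothesis and on $J_\gamma$ by the above, and a value cannot be shared between the two pieces because $f(J_\gamma)=\Omega$ is disjoint from $f(\gamma)=f(\gamma)$. This proves the proposition.

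The only genuinely delicate points are the use of the argument principle for a function that is merely continuous up to the boundary and the normalisation of the winding number of a Jordan curve; both are standard, and in fact the whole statement is a form of the Darboux--Picard theorem, which could be cited directly. In the applications of Proposition~\ref{prop91} in this paper the relevant curve $\gamma$ is piecewise analytic, so no pathology of $\gamma$ needs to be addressed.
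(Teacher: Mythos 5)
Your proof is correct, but it follows a genuinely different route from the paper. The paper does not argue from scratch: it cites Burckel (p.\ 310), where the statement is proved for $\gamma=\{|z|=1\}$, and reduces the general Jordan curve to that case via the Osgood--Taylor--Carath\'eodory theorem (compose $f$ with a Riemann map $\varphi:\mathbb{D}\to J_\gamma$, which extends to a homeomorphism of $\overline{\mathbb{D}}$ onto $\overline{J_\gamma}$, and apply the circle case to $f\circ\varphi$). You instead give a self-contained Darboux--Picard argument: winding numbers via the argument principle, $N\equiv 0$ on the unbounded component, $f(J_\gamma)\subseteq\Omega$ via the open mapping theorem and the fact that $f(\gamma)$ is the common boundary, and $k=1$ from the classical normalisation that a Jordan curve winds $\pm1$ around its interior points; the final bookkeeping for injectivity on $\overline{J_\gamma}$ is correct (the stray ``$f(\gamma)=f(\gamma)$'' is clearly just a slip). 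The trade-off: the paper's reduction outsources all boundary and rectifiability issues to the conformal change of variables, after which the curve is the unit circle and the quoted result applies verbatim; your direct route must justify the argument principle for a function only continuous up to a possibly non-rectifiable boundary, and the cleanest implementation of your exhaustion-plus-homotopy fix (inner curves $f\circ\gamma_r$ homotopic to $f\circ\gamma$ in $\mathbb{C}\setminus\{w\}$) itself quietly uses Carath\'eodory continuity of the Riemann map up to the boundary --- i.e.\ the same ingredient the paper invokes --- or, as you note, the piecewise analyticity of the curves actually used in this paper. Your closing observation that one could simply cite the Darboux--Picard theorem is apt and is essentially what the paper does, with Burckel playing that role.
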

For the proof, the reader is referred to \cite{Buc}, pp.\ 310, where the above fact is proved for $\gamma = \{z\in\mathbb{C}: |z|=1\}$. The general case follows from the Carath\'eodory theorem (or Osgood-Taylor-Carath\'eodory theorem), see \cite{Buc}, pp.\ 309.

\begin{proof}[Proof of Proposition \ref{prop90}] 

We introduce some notations: $l_\theta$ will denote the half line $\{re^{i\theta}:r \geq 0\}$; $\mathbb{C}_{A}$ will denote the region $\{z \in \mathbb{C}\setminus\{0\}: \arg z \in A \}$ for $A \subset \mathbb{R}$, $\sup\{|x-y|: x,y\in A\} \leq 2\pi$.   
 
Assume that $\alpha \in(0,\frac{1}{2})$. The case $\alpha =\frac{1}{2}$ follows from the fact that $\mathcal{UI}$ is closed under the weak convergence. Let $\phi:=\alpha\rho\pi \in[0,\alpha \pi]$ and $F(z):=z+e^{i\phi}z^{1-\alpha}$.  A key to the proof is the angles 
$$
\theta_1:= -\frac{\phi}{1-\alpha},~~~~\theta_2:=\frac{\pi-\phi}{1-\alpha}. 
$$
Since $\phi \in [0,\alpha \pi]$, it holds that $\theta_1 \in (-\pi, 0]$ and $\theta_2 \in [\pi, 2\pi)$. Also we can see $\pi < \theta_2 - \theta_1 < 2\pi$. It holds that $F(l_{\theta_1}\cup l_{\theta_2})\subset\mathbb{C}^-\cup\mathbb{R}$ since  
$$
\text{Im}\, F(re^{i\theta_1}) = r \sin\theta_1 \leq 0,~~~~\text{Im}\, F(re^{i\theta_2}) = r \sin\theta_2 \leq 0,~~~r \geq 0. 
$$
It is sufficient to prove that $F$ is injective on $l_{\theta_1} \cup l_{\theta_2}$ for the following reason. Since $F(z) =z(1+o(1))$ uniformly as $|z| \to \infty$, $z \in \mathbb{C}_{[\theta_1,\theta_2]}$, $F$ is injective on $\{z \in \mathbb{C}: \arg z \in [\theta_1,\theta_2], |z| >R\}$ for sufficiently large $R>0$.  
Therefore, if $F$ is injective on $l_{\theta_1} \cup l_{\theta_2}$, it is also injective on the curve $\Gamma_R:= \{re^{i\theta_1}: 0\leq r \leq R \}\cup \{re^{i\theta_2}: 0\leq r \leq R \} \cup \{Re^{i\theta}: \theta \in [\theta_1,\theta_2] \}$. Now Proposition \ref{prop91} implies that $F$ is injective in the bounded domain  surrounded by $\Gamma_R$ for any large $R>0$. Since $F(l_{\theta_1} \cup l_{\theta_2}) \subset \mathbb{C}^-\cup \mathbb{R}$ and $F(z) = z(1+o(1))$ as $|z|\to \infty$, each point of $\mathbb{C}^+$ is surrounded by $F(\Gamma_R)$ exactly once for large $R>0$. This means that  $F$ is injective in $\mathbb{C}_{(\theta_1,\theta_2)}$ and $F(\mathbb{C}_{(\theta_1,\theta_2)}) \supset \mathbb{C}^+$ and hence $F^{-1}$ analytically extends to $\mathbb{C}^+$ as a univalent map. 
 
 Now let us prove that  $F$ is injective on $l_{\theta_1} \cup l_{\theta_2}$. 
It holds that $F(re^{i\theta_1}) = re^{i\theta_1} + r^{1-\alpha} \in \mathbb{C}_{[\theta_1,0]}$ and similarly $F(re^{i\theta_2}) \in \mathbb{C}_{[\pi,\theta_2]}$ for any $r >0$. Since $0< \theta_2 -\theta_1 < 2\pi$, the intersection $\mathbb{C}_{[\theta_1,0]}\cap \mathbb{C}_{[\pi,\theta_2]}$ is empty. This means $F(l_{\theta_1}) \cap F(l_{\theta_2}) = \{0\}$. 

Therefore, we only have to prove that $F$ is injective on each $l_{\theta_k}$. 
Suppose $z,w \in l_{\theta_1}$ satisfy both $z\neq w$ and $F(z)=F(w)$. With the notation $z = re^{i\theta_1}$ and $w = Re^{i\theta_1}$,  it holds that 
\begin{equation}\label{eq90}
e^{i\theta_1}=- \frac{R^{1-\alpha} -r^{1-\alpha}}{R-r} <0, 
\end{equation}
a contradiction. Hence $F$ is injective on $l_{\theta_1}$. Similarly $F$ is injective on $l_{\theta_2}$. 
\end{proof}

A similar method is applicable to a continuous Boolean convolution  of Boolean stable laws defined by 
$$
F_{\textbf{b}(\sigma)}(z) = z + \int_{(0,\frac{1}{2}]} e^{i\alpha \pi}z^{1-\alpha} \sigma(d\alpha), 
$$ 
where $\sigma$ is a positive finite measure on $(0,\frac{1}{2}]$. Symbolically one may write 
$$
\textbf{b}(\sigma) = \int^{\uplus}_{(0,\frac{1}{2}]} \textbf{b}_\alpha^{1} \sigma(d\alpha). 
$$
The function $F_{\textbf{b}(\sigma)}$ maps $\mathbb{C}^+$ into itself and satisfies $\lim_{y \to \infty}\frac{F_{\textbf{b}(\sigma)}(iy)}{iy} =1$, so that $F_{\textbf{b}(\sigma)}$ indeed defines a probability measure $\textbf{b}(\sigma)$. 

\begin{theorem}\label{mix1}
The measure ${\normalfont \textbf{b}}(\sigma)$ is in $\mathcal{UI}$. Moreover, $\phi({\normalfont\textbf{b}}(\sigma)) = \infty$ and $\mathbf{b}(\sigma)$ is free regular. 
\end{theorem}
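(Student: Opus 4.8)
The plan is to prove the three assertions separately. Write $F:=F_{\mathbf{b}(\sigma)}$ and keep the notations $l_\theta=\{re^{i\theta}:r\ge 0\}$ and $\mathbb{C}_A=\{z\ne 0:\arg z\in A\}$ from the proof of Proposition~\ref{prop90}. For $\mathbf{b}(\sigma)\in\mathcal{UI}$ the idea is to adapt that proof: the crucial point is that for the positive Boolean stable law the ``upper'' angle $(\pi-\alpha\pi)/(1-\alpha)=\pi$ is \emph{independent of $\alpha$}, so the single ray $l_\pi$ serves as a boundary ray for the whole family. Indeed $F(-r)=-r-\int_{(0,1/2]}r^{1-\alpha}\sigma(d\alpha)$ is real, negative and strictly decreasing in $r$, hence $F(l_\pi)\subset(-\infty,0)$ and $F|_{l_\pi}$ is injective.

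The ``lower'' angle $-\alpha\pi/(1-\alpha)$ still depends on $\alpha$ and tends to $-\pi$ as $\alpha\to\frac12$, degenerating the sector; I would remove this by a preliminary reduction. Replacing $\sigma$ by its push-forward under $\alpha\mapsto(1-2\varepsilon)\alpha$, whose reciprocal Cauchy transform $z+\int e^{i(1-2\varepsilon)\alpha\pi}z^{1-(1-2\varepsilon)\alpha}\sigma(d\alpha)$ tends to $F(z)$ as $\varepsilon\downarrow0$, and using that $\mathcal{UI}$ is closed under weak convergence (Proposition~\ref{lem1}), it suffices to assume $\operatorname{supp}\sigma\subset(0,\frac12-\varepsilon]$. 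For such $\sigma$ set $\theta_1:=-\frac{(1/2-\varepsilon)\pi}{1/2+\varepsilon}\in(-\pi,0)$. Then for $\alpha$ in the support $\alpha\pi+(1-\alpha)\theta_1=\theta_1+\alpha(\pi-\theta_1)\in(\theta_1,0]\subset(-\pi,0]$, so $\text{Im}\,F(re^{i\theta_1})=r\sin\theta_1+\int r^{1-\alpha}\sin(\alpha\pi+(1-\alpha)\theta_1)\sigma(d\alpha)\le0$ (strictly $<0$ for $r>0$), giving $F(l_{\theta_1})\subset\mathbb{C}^-\cup\mathbb{R}$ and $F(l_{\theta_1})\cap F(l_\pi)=\{0\}$; and $F$ is injective on $l_{\theta_1}$ since, if $F(r_1e^{i\theta_1})=F(r_2e^{i\theta_1})$ with $r_1>r_2$, then $(r_1-r_2)e^{i\theta_1}=-\int(r_1^{1-\alpha}-r_2^{1-\alpha})e^{i(\alpha\pi+(1-\alpha)\theta_1)}\sigma(d\alpha)$ has strictly negative imaginary part on the left and nonnegative on the right (because $r_1^{1-\alpha}-r_2^{1-\alpha}>0$ and $\sin(\alpha\pi+(1-\alpha)\theta_1)\le0$) --- a contradiction. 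With these facts the argument of Proposition~\ref{prop90} (via Proposition~\ref{prop91} and $F(z)=z(1+o(1))$ uniformly on the sector) applies verbatim, yielding that $F$ is univalent on $\mathbb{C}_{(\theta_1,\pi)}\supset\mathbb{C}^+$ with $F(\mathbb{C}_{(\theta_1,\pi)})\supset\mathbb{C}^+$; hence $\mathbf{b}(\sigma)\in\mathcal{UI}$, so $\mathbf{b}(\sigma)\in ID(\boxplus)$.

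Since $K_{\mathbf{b}(\sigma)}(z)=z-F(z)=-\int e^{i\alpha\pi}z^{1-\alpha}\sigma(d\alpha)$, the energy function of $\mathbf{b}(\sigma)^{\uplus t}$ is $tK_{\mathbf{b}(\sigma)}=K_{\mathbf{b}(t\sigma)}$, so $\mathbf{b}(\sigma)^{\uplus t}=\mathbf{b}(t\sigma)\in\mathcal{UI}\subset ID(\boxplus)$ for all $t>0$, and Proposition~\ref{prop08} gives $\phi(\mathbf{b}(\sigma))=\sup\{t>0:\mathbf{b}(\sigma)^{\uplus t}\in ID(\boxplus)\}=\infty$. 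For free regularity: with the principal branch of $z^{1-\alpha}$, $F$ continues analytically to $\mathbb{C}\setminus[0,\infty)$ with $F(x)=x-\int|x|^{1-\alpha}\sigma(d\alpha)\in(-\infty,0)$ for $x<0$, so $\mathbf{b}(\sigma)$ is supported on $[0,\infty)$ and $F$ restricts to an increasing bijection of $(-\infty,0)$ onto itself with $F(w)\le w$; by the second paragraph $F^{-1}$ continues analytically across $(-\infty,0)$ (Schwarz reflection, real boundary values there) and restricts to an increasing bijection of $(-\infty,0)$ onto itself with $F^{-1}(x)\ge x$. Since $\mathbf{b}(\sigma)\in ID(\boxplus)$, $\mathbf{b}(\sigma)^{\boxplus t}$ exists for all $t>0$; for $t\in(0,1]$ its inverse reciprocal Cauchy transform $H_t(z):=(1-t)z+tF^{-1}(z)$ restricts on $(-\infty,0)$ to an increasing bijection onto $(-\infty,0)$ with $H_t'>0$, so $F_{\mathbf{b}(\sigma)^{\boxplus t}}=H_t^{-1}$ continues analytically across $(-\infty,0)$ with values in $(-\infty,0)$, whence $\mathbf{b}(\sigma)^{\boxplus t}$ is supported on $[0,\infty)$; for $t>1$, picking $n>t$ and writing $\mathbf{b}(\sigma)^{\boxplus t}=(\mathbf{b}(\sigma)^{\boxplus t/n})^{\boxplus n}$ exhibits it as a free additive convolution of measures on $[0,\infty)$, hence itself on $[0,\infty)$. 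Therefore $\mathbf{b}(\sigma)$ is free regular. (Alternatively, free regularity could be deduced from a characterization of free regular measures in terms of their free L\'evy--Khintchine data, cf.\ \cite{AHS}.)

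The main obstacle is the second paragraph: producing one pair of boundary rays valid simultaneously for all $\alpha$ in the --- possibly continuous --- support of $\sigma$, and coping with the collapse of the sector as $\alpha\to\frac12$, which is precisely what forces the rescaling/weak-limit reduction. Once that is in hand the rest tracks the template of Proposition~\ref{prop90}, and the statements $\phi(\mathbf{b}(\sigma))=\infty$ and free regularity are comparatively easy.
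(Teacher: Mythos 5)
Your treatment of the first two assertions is correct and is essentially the paper's own proof: you reduce, via the closedness of $\mathcal{UI}$ under weak limits (Proposition \ref{lem1}), to the case where $\sigma$ is supported away from $\frac12$ (the paper does this by approximating $\sigma$ by measures supported on $(0,\beta]$, $\beta<\frac12$; your rescaling push-forward is an equally valid, and in fact more explicit, way of doing the same reduction), and your boundary rays $l_{\theta_1}$, $l_\pi$ with $\theta_1=-\beta\pi/(1-\beta)$, the sign computation $\alpha\pi+(1-\alpha)\theta_1\in(\theta_1,0]$, and the injectivity argument on the two rays are exactly those of the paper's proof; likewise $\phi(\mathbf{b}(\sigma))=\infty$ via $\mathbf{b}(\sigma)^{\uplus t}=\mathbf{b}(t\sigma)$ and Proposition \ref{prop08} is identical.

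The free-regularity part is where you depart from the paper (which simply quotes Theorem 13 of \cite{AHS}, as your own closing parenthesis suggests), and your primary argument has a genuine gap at its last step. The relation $F_{\mathbf{b}(\sigma)^{\boxplus t}}^{-1}=H_t$, $H_t(z)=(1-t)z+tF^{-1}(z)$, is known a priori only on a truncated cone $\Gamma_{\beta,M}$ near infinity, and you never show that $H_t$ is univalent on a domain containing $\mathbb{C}^+\cup(-\infty,0)$: it is a convex combination of the identity with the univalent map $F^{-1}$, which need not be univalent, and since $F^{-1}$ takes values in the sector $\mathbb{C}_{(\theta_1,\pi)}$, $H_t$ need not even map $\mathbb{C}^+$ into $\mathbb{C}^+$. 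Hence the assertion ``$F_{\mathbf{b}(\sigma)^{\boxplus t}}=H_t^{-1}$ continues analytically across $(-\infty,0)$'' is unjustified: for $z\in\mathbb{C}^+$ near a point of $(-\infty,0)$ all one knows (after continuing the identity from the cone to $\mathbb{C}^+$) is $H_t\bigl(F_{\mathbf{b}(\sigma)^{\boxplus t}}(z)\bigr)=z$, and nothing shown forces $F_{\mathbf{b}(\sigma)^{\boxplus t}}(z)$ to coincide with the value of the \emph{local} inverse of $H_t$ at the negative real preimage, nor even to stay bounded. A clean repair stays inside the paper: for $t\in(0,1)$ apply the subordination relation (\ref{eq1}) to the pair $(s,t)=(t,1)$, which gives $\omega(z)=tz+(1-t)F(z)$ with $F_{\mathbf{b}(\sigma)^{\boxplus t}}\circ\omega=F$ on all of $\mathbb{C}^+$; since $F$ is real, negative, with $F'>1$ on $(-\infty,0)$, so is $\omega$ (up to the obvious modifications), and therefore $F_{\mathbf{b}(\sigma)^{\boxplus t}}=F\circ\omega^{-1}$ extends analytically across $(-\infty,0)$ with values in $(-\infty,0)$; your Stieltjes-inversion conclusion for $t\in(0,1]$ and the reduction of $t>1$ to $t\le1$ then go through, and this also removes the need to continue $F^{-1}$ across $(-\infty,0)$ (a step you justified only by an appeal to Schwarz reflection, whose real boundary values themselves require the injectivity-on-the-closed-sector and properness argument). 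Alternatively, simply cite \cite{AHS} as the paper does.
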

\begin{proof}
We follow the notation of Proposition \ref{prop90}. 
Let $F(z):=z + \int_{(0,\frac{1}{2}]} e^{i\alpha \pi}z^{1-\alpha} \sigma(d\alpha)$. 
We may assume that $\sigma$ is supported on $(0, \beta]$, where $\beta \in (0,\frac{1}{2})$;  
a general $\sigma$ can be approximated by such measures. Let $\theta_\beta:=-\frac{\beta\pi}{1-\beta} \in (-\pi,0)$. 
Then the proof of Proposition \ref{prop90} holds with the following modifications. 
From direct calculation, we can prove that $F(l_{\theta_\beta}\cup l_{\pi}) \subset \mathbb{C}^- \cup \mathbb{R}$ and $F(l_{\theta_\beta}) \cap F(l_\pi) = \{0\}$. 
Assume that $z=re^{i\theta_\beta} ,w=Re^{i\theta_\beta}$ satisfy $z \neq w$ and $F(z) =F(w)$, then
$$
e^{i\theta_\beta} =  \int_{(0,\beta]}e^{i((\alpha+1)\pi +(1-\alpha) \theta_\beta)}\frac{R^{1-\alpha}-r^{1-\alpha}}{R-r} \sigma(d\alpha). 
$$
Since $(\alpha+1)\pi +(1-\alpha) \theta_\beta=\pi-\frac{\beta-\alpha}{1-\beta}\pi \in (\theta_\beta+\pi, \pi]$, the above relation causes a contradiction and this proves the injectivity on $l_{\theta_\beta}$. Similarly, $F$ is injective on $l_\pi$. 

The property $\phi(\mathbf{b}(\sigma)) =\infty$ follows from Proposition \ref{prop08} in combination with the property $\textbf{b}(\sigma)^{\uplus t} = \textbf{b}(t\sigma) \in ID(\boxplus)$.  
The free regularity is a consequence of Theorem 13 \cite{AHS}. 
\end{proof}

\begin{proposition}
Let  $\frac{1}{2}<\alpha\leq\frac{2}{3}$ and $2-\frac{1}{\alpha} \leq \rho \leq \frac{1}{\alpha}-1$, then
 $\mathbf{b}_\alpha^\rho$ is freely infinitely divisible, but $\mathbf{b}_\alpha^\rho \notin\mathcal{UI}$.
\end{proposition}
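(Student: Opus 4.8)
\emph{Notation and strategy.} Keep the notation from the proof of Proposition~\ref{prop90}: $F(z)=z+e^{i\phi}z^{1-\alpha}$ with $\phi:=\alpha\rho\pi$ and the principal branch of $z^{1-\alpha}$, and the angles $\theta_1:=-\frac{\phi}{1-\alpha}$, $\theta_2:=\frac{\pi-\phi}{1-\alpha}$. The hypothesis $2-\frac1\alpha\le\rho\le\frac1\alpha-1$ is exactly $(2\alpha-1)\pi\le\phi\le(1-\alpha)\pi$, equivalently $-\pi\le\theta_1$ and $\theta_2\le2\pi$; it forces $0<\phi<\alpha\pi$, but now $\theta_2-\theta_1=\frac{\pi}{1-\alpha}\in(2\pi,3\pi]$, so the ``sector'' $\mathbb{C}_{(\theta_1,\theta_2)}$ is no longer embeddable in $\mathbb{C}$ and the argument of Proposition~\ref{prop90} is unavailable. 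For free infinite divisibility the plan is to show that the Voiculescu transform $\phi_\mu$ of $\mu:=\mathbf{b}_\alpha^\rho$ extends analytically to $\mathbb{C}^+$ with values in $\mathbb{C}^-\cup\mathbb{R}$. Solving $F(z+\phi_\mu(z))=z$ for $z$ gives the inverse relation $z=\Psi(\phi_\mu(z))$, where
\[
\Psi(h):=\bigl(-e^{-i\phi}h\bigr)^{1/(1-\alpha)}-h
\]
(principal branch), so $\phi_\mu$ is the branch of $\Psi^{-1}$ with $\phi_\mu(z)\sim-e^{i\phi}z^{1-\alpha}$ as $z\to\infty$, whence its asymptotic values $\{-e^{i\phi}z^{1-\alpha}:z\in\mathbb{C}^+\}$ fill the sector $S:=\mathbb{C}_{(\pi+\phi,\,\pi+\phi+(1-\alpha)\pi)}\subset\mathbb{C}^-$ (using $0<\phi<\alpha\pi$). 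The key computation is that $\Psi(\mathbb{R})\subseteq\mathbb{C}^-\cup\mathbb{R}$: for $r>0$, $\Psi(r)=r^{1/(1-\alpha)}e^{i\theta_2}-r$ has imaginary part $r^{1/(1-\alpha)}\sin\theta_2\le0$ since $\theta_2\in[\pi,2\pi]$; for $r<0$, $\Psi(r)=|r|^{1/(1-\alpha)}e^{i\theta_1}+|r|$ has imaginary part $|r|^{1/(1-\alpha)}\sin\theta_1\le0$ since $\theta_1\in[-\pi,0]$; and $\Psi(0)=0$. This is exactly where the two bounds on $\rho$ get used. It follows that the connected component $\mathcal{H}$ of $\Psi^{-1}(\mathbb{C}^+)$ containing $S$ is disjoint from $\mathbb{R}$ (a point of $\mathbb{R}\cap\mathcal{H}$ would be sent by $\Psi$ into $\mathbb{C}^+$), so $\mathcal{H}$, being connected and meeting $\mathbb{C}^-$, lies entirely in $\mathbb{C}^-$. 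Granting that $\Psi|_{\mathcal{H}}\colon\mathcal{H}\to\mathbb{C}^+$ is a conformal bijection, $\phi_\mu$ then extends to all of $\mathbb{C}^+$ as $(\Psi|_{\mathcal{H}})^{-1}$, with values in $\mathcal{H}\subseteq\mathbb{C}^-$, giving $\mu\in ID(\boxplus)$.

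\emph{The bijectivity of $\Psi|_{\mathcal{H}}$.} This map is proper onto $\mathbb{C}^+$ (a sequence in $\mathcal{H}$ whose $\Psi$-images converge in $\mathbb{C}^+$ can neither escape to $\infty$ nor approach $\partial\mathcal{H}$, using $\Psi(\partial\mathcal{H})\subseteq\mathbb{R}\cup\{\infty\}$ and $\Psi(0)=0$), hence surjective, and it has no critical point in $\mathcal{H}$. For the latter one substitutes $h=-e^{i\phi}u^{1-\alpha}$, turning $\Psi(h)$ into $F(u)$ with $\arg u$ running over $(\theta_1,\theta_2)$, and observes that the zeros of $F'(u)=1+(1-\alpha)e^{i\phi}u^{-\alpha}$ occur only at $\arg u\equiv\frac{\phi-\pi}{\alpha}\pmod{2\pi/\alpha}$, which under the hypothesis never lies in the open interval $(\theta_1,\theta_2)$ — these arguments sit exactly on $\theta_1$ or $\theta_2$ at the endpoints $\rho=2-\frac1\alpha$ and $\rho=\frac1\alpha-1$. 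A proper holomorphic map with no critical points onto the simply connected domain $\mathbb{C}^+$ is a biholomorphism, which finishes the proof that $\mathbf{b}_\alpha^\rho\in ID(\boxplus)$.

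\emph{Non-membership in $\mathcal{UI}$.} By the argument of the proof of Proposition~\ref{prop90} (injectivity of $F$ on $\mathbb{R}$, using that $-e^{i\phi}$ is not a positive real and $F(\mathbb{R}^+)\cap F(\mathbb{R}^-)=\{0\}$, together with Proposition~\ref{prop91} applied to truncated half-disks), $F$ is univalent on $\mathbb{C}^+$ with Jordan image $D:=F(\mathbb{C}^+)$; but $D\subsetneq\mathbb{C}^+$, since near $0$ one has $F(z)\approx e^{i\phi}z^{1-\alpha}$, so $D$ fills there only the subsector $\mathbb{C}_{(\phi,\,\phi+(1-\alpha)\pi)}$ of $\mathbb{C}^+$, and $\mathbb{C}^+\setminus\overline D$ has two components $L_\pm$, pinched together at $0$. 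The analytic continuation of $F^{-1}$ from $D$ to all of $\mathbb{C}^+$ (single-valued, as no branch value of $F$ intervenes) carries $L_+$ into $\mathbb{C}_{(\theta_1,0)}$ and $L_-$ into $\mathbb{C}_{(\pi,\theta_2)}=\mathbb{C}_{(-\pi,\theta_2-2\pi)}$; since $\theta_2-\theta_1>2\pi$, these two sectors overlap in $\mathbb{C}$ (their intersection $\mathbb{C}_{(\theta_1,\theta_2-2\pi)}$ is nonempty), so the continued $F^{-1}$ takes a common value at some point of $L_+$ and some point of $L_-$ and is therefore not injective on $\mathbb{C}^+$; hence $\mathbf{b}_\alpha^\rho\notin\mathcal{UI}$.

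\emph{Main obstacle.} The genuinely delicate step is the no-critical-point verification in the second paragraph, i.e.\ the elementary but fiddly trigonometric bookkeeping that locates the critical arguments of $F'$ relative to the open interval $(\theta_1,\theta_2)$; this is where the sharpness of the two bounds on $\rho$ really shows up. Secondary technical points are the handling of $h=0$ in the properness argument, where $\Psi$ has a branch point masked by the dominant linear term $-h$, and the precise description of the components $L_\pm$ and of their images under the continued $F^{-1}$.
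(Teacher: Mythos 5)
Your proof of the free infinite divisibility half is correct, and it packages the argument genuinely differently from the paper. The paper stays in the variable $z$ and, because $\theta_2-\theta_1=\frac{\pi}{1-\alpha}>2\pi$, works on a two-sheeted region glued from $U_1=\mathbb{C}_{(\theta_1,\pi)}$ and $U_2=\mathbb{C}_{(0,\theta_2)}$: it proves univalence of $F$ on each sheet by boundary injectivity as in Proposition \ref{prop90}, defines the continuation of $F^{-1}$ piecewise by $F_1^{-1}$ and $F_2^{-1}$, checks consistency on the overlap, and then verifies $\mathrm{Im}\,(F^{-1}(z)-z)\le 0$ by a case analysis. You instead pass to the inverse variable $h=\phi_\mu(z)$, where $F(z+\phi_\mu(z))=z$ becomes $z=\Psi(h)$ with $\Psi$ single-valued on $\mathbb{C}^-$, so no Riemann surface is needed; the two bounds on $\rho$ enter exactly as $\theta_1\ge-\pi$, $\theta_2\le 2\pi$ in your computation $\Psi(\mathbb{R})\subset\mathbb{C}^-\cup\mathbb{R}$ (the same boundary-ray computation as the paper's), and bijectivity of $\Psi|_{\mathcal{H}}$ onto $\mathbb{C}^+$ comes from properness plus absence of critical points. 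Your critical-argument bookkeeping is right: $\frac{\phi-\pi}{\alpha}\le\theta_1$ is equivalent to $\phi\le(1-\alpha)\pi$ and $\frac{\phi+\pi}{\alpha}\ge\theta_2$ to $\phi\ge(2\alpha-1)\pi$, and consecutive critical arguments are spaced $\frac{2\pi}{\alpha}\ge\theta_2-\theta_1$ apart, so none lies in $(\theta_1,\theta_2)$; this also covers the endpoint values of $\rho$ directly, where the paper argues by taking limits. What your route buys is that the analytic extension of $\phi_\mu$ and the range condition $\phi_\mu(\mathbb{C}^+)\subset\mathbb{C}^-\cup\mathbb{R}$ come out simultaneously from $\mathcal{H}\subset\mathbb{C}^-$; the cost is reliance on proper-map/covering-space machinery, which the paper replaces by elementary injectivity-on-rays arguments plus Proposition \ref{prop91}.

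The non-$\mathcal{UI}$ half, however, has a genuine gap in its last step. From ``the continuation carries $L_+$ into $\mathbb{C}_{(\theta_1,0)}$ and $L_-$ into $\mathbb{C}_{(\pi,\theta_2)}$'' and ``these sectors overlap'' you cannot conclude that the two images share a value: two sets contained in overlapping sectors need not intersect. (The subsidiary claims that $\mathbb{C}^+\setminus\overline{D}$ has exactly two components and that the continuation maps them into those sectors are also asserted rather than proved, though they only matter for this flawed inference.) What is needed is an explicit coincidence of values, and that is precisely what the paper exhibits: take $w$ with $\arg w\in(\theta_1,\theta_2-2\pi)$ (nonempty since $\theta_2-\theta_1>2\pi$) and $|w|$ small; the two branch values $F_1(w)\neq F_2(w)$ both lie in $\mathbb{C}^+$, and the continued $F^{-1}$ sends each of them back to $w$. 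The same fix is available inside your own framework: choose $h_1\neq h_2\in\mathcal{H}$ with $|h_1|=|h_2|$ small and $\arg h_1-\arg h_2=2\pi(1-\alpha)$, which is possible because $2\pi(1-\alpha)<\pi$ and all sufficiently small points of a fixed closed subsector of $\mathbb{C}^-$ lie in $\mathcal{H}$ (there $\Psi(h)\approx -h\in\mathbb{C}^+$, and they connect to $S$); then $z_j:=\Psi(h_j)$ are distinct points of $\mathbb{C}^+$ by injectivity of $\Psi|_{\mathcal{H}}$, while the continued inverse $z\mapsto z+(\Psi|_{\mathcal{H}})^{-1}(z)$ takes at $z_1$ and $z_2$ the common value $(-e^{-i\phi}h_1)^{1/(1-\alpha)}=(-e^{-i\phi}h_2)^{1/(1-\alpha)}$, so it is not univalent and $\mathbf{b}_\alpha^\rho\notin\mathcal{UI}$. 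With this step supplied your argument is complete; as written, the conclusion of the non-$\mathcal{UI}$ part does not follow.
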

\begin{proof}
Let $\phi:= \alpha \rho\pi$ and $F=F_{\mathbf{b}_\alpha^\rho}$. First we assume that $(2\alpha-1)\pi < \phi < (1-\alpha)\pi$. This inequality implies $\theta_1 \in (-\pi,0)$ and $\theta_2 \in (\pi,2\pi)$. 
 We will prove that $\phi(z)=F^{-1}(z)-z$ has an analytic continuation to $\mathbb{C}^+$ with values in $\mathbb{C}^- \cup \mathbb{R}$. 

We follow the notation of Proposition \ref{prop90}, but for simplicity, let $U_1:= \mathbb{C}_{(\theta_1,\pi)}$ and $U_2:=\mathbb{C}_{(0,\theta_2)}$. In this case, $\theta_2-\theta_1 > 2\pi$, so that we have to consider a Riemannian surface made of $U_1$ and $U_2$, glued on $\mathbb{C}^+$. Let us consider the analytic maps $F_i:=F|_{U_i}$ for $i=1,2$. 
 
Following the arguments of Proposition \ref{prop90}, we can prove that $F_i$ is univalent in $U_i$ for $i=1,2$. For example, let us consider $i=1$. From simple observations, we find $F_1(re^{i\theta_1}) = re^{i\theta_1} + r^{1-\alpha} \in \mathbb{C}_{(\theta_1,0)}$ and $F_1(-r) =-r + r^{1-\alpha} e^{i(\phi+(1-\alpha)\pi)} \in \mathbb{C}_{(\phi + (1-\alpha)\pi,\pi)}$ for any $r>0$. Hence $F(l_{\theta_1}) \cap F(l_{\pi}) = \{0\}$. 

If $z = re^{i\theta_1}, w=Re^{i\theta_1}$ satisfy $z \neq w$, $F_1(z) =F_1(w)$, then (\ref{eq90}) holds, which contradicts the property $\theta_1 \in (-\pi,0)$. If $z,w \in l_\pi$, $z \neq w$, then $e^{i(\phi+(1-\alpha)\pi)} = \frac{R-r}{R^{1-\alpha}-r^{1-\alpha}} >0$, a contradiction. The above arguments imply $F_1$ is univalent in $U_1$ and $F_1(U_1) \supset \mathbb{C}_{(0, \phi+(1-\alpha)\pi)}$. Similarly one can show that $F_2$ is univalent in $U_2$ and $F_2(U_2) \supset \mathbb{C}_{(\phi,\pi)}$. Notice that 
$F_1(U_1) \cup F_2(U_2) \supset \mathbb{C}^+$. These properties still hold for $\phi\in\{(2\alpha-1)\pi, (1-\alpha)\pi\}$ by taking limits. 

The map $F^{-1}$ extends analytically to $\mathbb{C}^+$ as follows:  
\[ 
F^{-1}(z):= \left \{ \begin{array}{cc}
F^{-1}_1(z),       &   z\in F_1(U_1)\cap \mathbb{C}^+,       \\
 F^{-1}_2(z), &    z\in F_2(U_2)\cap \mathbb{C}^+.   
  \end{array}\right.
 \]
This map is well defined for the following reason. 
Notice that $F_1(U_1\setminus \mathbb{C}^+)\cap F_2(U_2\setminus \mathbb{C}^+) \subset \mathbb{C}^-$. Indeed,
$F_1(U_1\setminus \mathbb{C}^+)\subset\mathbb{C}_{(\theta_1, \phi)}$ and $F_2(U_2\setminus \mathbb{C}^+)\subset\mathbb{C}_{(\phi+(1-\alpha)\pi, \theta_2)}$. If $z\in F_1(U_1)\cap F_2(U_2)\cap \mathbb{C}^+$, then there are $z_1\in U_1$ and $z_2\in U_2$ with $z=F_1(z_1)=F_2(z_2)$.
Either $z_1$ or $z_2$ shall be in $\mathbb{C}^+$ because $F_1(U_1\setminus \mathbb{C}^+)\cap F_2(U_2\setminus \mathbb{C}^+) \subset \mathbb{C}^-$. If $z_1\in\mathbb{C}^+$(resp.\ $z_2\in\mathbb{C}^+$), by definition $F_1(z_1)=F_2(z_1)$(resp.\ $F_1(z_2)=F_2(z_2)$) and hence $z_1=z_2$ from the univalence of $F_2$. 

Now, we see that $\phi(z):=F^{-1}(z)-z$ maps $\mathbb{C}^+$ into $\mathbb{C}^- \cup \mathbb{R}$. If $z\in F(\mathbb{C}^+)$, then there is $w\in\mathbb{C}^+$ such that $\phi(z)=w-F(w)$ and then $\text{Im}\, \phi(z)=\text{Im}\, w-\text{Im}\, F(w)\leq 0$ since $\text{Im}\, F(w) > \text{Im}\, w$. 
If $z\notin F(\mathbb{C}^+)$ either $z\in F_1(U_1)$ or $z\in F_2(U_2)$. Assume without loss of generality that $z\in F_1(U_1)$. Then there is $w\in U_1\setminus \mathbb{C}^+$ such that $\phi(z)=w-F_1(w)=w-z$ and then since $\text{Im}\, z>0$ and $\text{Im}\, w\leq0$ we have $\text{Im}\, \phi(z)\leq 0.$  
This proves that $\phi(z)=F^{-1}(z)-z$ maps $\mathbb{C}^+$ into $\mathbb{C}^- \cup \mathbb{R}$ and therefore $\mathbf{b}^\rho_\alpha$ is freely infinitely divisible.

We prove that $\mathbf{b}^\rho_\alpha\notin\mathcal{UI}$. The sheets $U_1$ and $U_2$ have an intersection on $\mathbb{C}^-$. If we take a $w$ from the intersection with sufficiently small $|w|$, then we can prove $F_1(w) \neq F_2(w)$ and $F_1(w), F_2(w) \in \mathbb{C}^+$. 
This means $F^{-1}$ is not univalent in $\mathbb{C}^+$. 
\end{proof}

The case $(\alpha,\rho) =(1,\frac{1}{2})$ corresponds to the Cauchy distribution, which is freely infinitely divisible. 
Now we show that $\textbf{b}_\alpha^\rho$ does not belong to $ID(\boxplus)$ for the remaining parameters. First we consider the case $\alpha \geq 1$. 

Let $\mu$ be $\boxplus$-infinitely divisible and $\mu_t:=\mu^{\boxplus t}$. 
For $s \leq t$, a subordination function $\omega_{s,t}: \mathbb{C}^+\to\mathbb{C}^+$ exists so that it satisfies $F_{\mu_s} \circ \omega_{s,t} = F_{\mu_t}$. This relation is equivalent to 
\begin{equation}\label{eq1}
F_{\mu_t}(z)= \frac{t/s}{t/s-1}\omega_{s,t}(z)-\frac{z}{t/s-1}.
\end{equation} 
 It is proved in Theorem 4.6 of \cite{Bel3} that $\omega_{s,t}$ and hence $F_{\mu_t}$ extends to a continuous function from $\mathbb{C}^+\cup \mathbb{R}$ into itself. 


\begin{proposition} Boolean stable laws $\mathbf{b}_\alpha^\rho$ are not $\boxplus$-infinitely divisible in the following cases: 

(1) $\alpha \in (1, 2]$,  

(2) $\alpha = 1$ and $\rho \neq \frac{1}{2}$. 
\end{proposition}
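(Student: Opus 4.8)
The plan is to argue by contradiction, exploiting the explicit affine form of the subordination functions of a free convolution semigroup. Suppose $\mathbf{b}_\alpha^\rho\in ID(\boxplus)$, write $F:=F_{\mathbf{b}_\alpha^\rho}$, and let $\mu_s:=(\mathbf{b}_\alpha^\rho)^{\boxplus s}$, which now exists for every $s\in(0,1)$. The subordination relation $F_{\mu_s}\circ\omega_{s,1}=F$ together with (\ref{eq1}) for $t=1$ gives $\omega_{s,1}(z)=(1-s)F(z)+sz$; substituting $F=F_{\mu_s}\circ\omega_{s,1}$ shows $sz=\omega_{s,1}(z)-(1-s)F_{\mu_s}(\omega_{s,1}(z))$, so $\omega_{s,1}$ admits a left inverse and is therefore injective on $\mathbb{C}^+$. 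Moreover $\omega_{s,1}$ extends continuously to $\mathbb{R}$ away from its singular set by Theorem 4.6 of \cite{Bel3}, and the same computation then shows that $\omega_{s,1}(x_1)=\omega_{s,1}(x_2)\in\mathbb{C}^+$ forces $x_1=x_2$ for boundary points $x_1,x_2$ (one only uses continuity of $F$ and $\omega_{s,1}$ up to $\mathbb{R}$ away from singularities and analyticity of $F_{\mu_s}$ at the interior point $\omega_{s,1}(x_i)$). The contradiction will come from exhibiting non-injectivity of the explicit function $z\mapsto(1-s)F(z)+sz$.

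\textbf{Case $\alpha\in(1,2]$.} Here $F(z)=z-e^{i\beta}z^{1-\alpha}$ with $\beta=(\alpha-2)\rho\pi$, so $\omega_{s,1}(z)=z-(1-s)e^{i\beta}z^{1-\alpha}$; solving $\omega_{s,1}'(z)=0$ produces $z_0=\bigl((1-s)(\alpha-1)\bigr)^{1/\alpha}e^{i(\beta+\pi)/\alpha}$. Since $\beta\in[(\alpha-2)\pi,0]$ one has $(\beta+\pi)/\alpha\in\bigl[(1-\tfrac1\alpha)\pi,\tfrac\pi\alpha\bigr]\subset(0,\pi)$, so $z_0\in\mathbb{C}^+$. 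An injective holomorphic map has nowhere-vanishing derivative, so the presence of this critical point contradicts the injectivity of $\omega_{s,1}$ on $\mathbb{C}^+$. (For $\alpha=2$ one may instead just note $\mathbf{b}_2^\rho=\tfrac12(\delta_{-1}+\delta_1)$ has two atoms and hence is not in $ID(\boxplus)$.)

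\textbf{Case $\alpha=1$, $\rho\neq\tfrac12$.} The reflection $x\mapsto-x$ preserves $ID(\boxplus)$ and carries $\mathbf{b}_1^\rho$ to $\mathbf{b}_1^{1-\rho}$ up to a translation, so we may assume $\rho>\tfrac12$; put $a=\tfrac{2(2\rho-1)}\pi>0$. Then $F(z)=z+2\rho i-a\log z$ and $\omega_{s,1}(z)=z+2(1-s)\rho i-(1-s)a\log z$, whose critical point $(1-s)a$ lies on $(0,\infty)=\partial\mathbb{C}^+$ rather than in the interior. For $x>0$ the imaginary part $\text{Im}\,\omega_{s,1}(x)=2(1-s)\rho$ is constant, while $\text{Re}\,\omega_{s,1}(x)=x-(1-s)a\log x$ is strictly convex with minimum at $x=(1-s)a$ and tends to $+\infty$ as $x\to0^+$ and as $x\to\infty$; hence there are $0<x_1<(1-s)a<x_2$ with $\omega_{s,1}(x_1)=\omega_{s,1}(x_2)\in\mathbb{C}^+$, contradicting the boundary injectivity recorded above.

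The main obstacle is the case $\alpha=1$: there the non-injectivity of $\omega_{s,1}$ is invisible inside the open half-plane, since its only critical point sits on the real axis, so one must first upgrade injectivity of $\omega_{s,1}$ from $\mathbb{C}^+$ to the boundary — which is precisely where Theorem 4.6 of \cite{Bel3} is used — and one must check that the two witnessing points $x_1,x_2$ lie where the boundary form of $F=F_{\mu_s}\circ\omega_{s,1}$ is valid. In the case $\alpha\in(1,2]$ the only thing needing care is the verification that the critical point of the explicit $\omega_{s,1}$ lies strictly inside $\mathbb{C}^+$ for every $\rho\in[0,1]$.
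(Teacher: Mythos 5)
Your argument is correct, but it runs along a different line than the paper's. The paper's proof is essentially one sentence: for a $\boxplus$-infinitely divisible measure, Theorem 4.6 of \cite{Bel3} makes the subordination function $\omega_{s,t}$, and hence $F_{\mu_t}$ through the affine relation (\ref{eq1}), extend continuously to $\mathbb{C}^+\cup\mathbb{R}$ with \emph{finite} values; since $F_{\mathbf{b}_\alpha^\rho}(0)=\infty$ when $\alpha\in(1,2]$ or $\alpha=1$, $\rho\neq\frac12$, this is impossible. You instead exploit the same affine form $\omega_{s,1}=(1-s)F+s\,\mathrm{id}$ but derive the contradiction from \emph{injectivity} of $\omega_{s,1}$ (which you correctly obtain from the left inverse $w\mapsto\frac1s\bigl(w-(1-s)F_{\mu_s}(w)\bigr)$): an interior critical point for $\alpha\in(1,2]$ (your branch bookkeeping checks out, and in fact it also covers $\alpha=2$, making the two-atom remark optional), and a two-to-one boundary behaviour on $(0,\infty)$ for $\alpha=1$, $\rho>\frac12$, where the limiting identity $sx=\omega_{s,1}(x)-(1-s)F_{\mu_s}(\omega_{s,1}(x))$ at boundary points with $\omega_{s,1}(x)\in\mathbb{C}^+$ rules out $\omega_{s,1}(x_1)=\omega_{s,1}(x_2)$ for $x_1\neq x_2$; the reflection reducing $\rho<\frac12$ to $\rho>\frac12$ is also fine (in fact the reflection of $\mathbf{b}_1^\rho$ is exactly $\mathbf{b}_1^{1-\rho}$). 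One remark: your appeal to Theorem 4.6 of \cite{Bel3} is superfluous, since $\omega_{s,1}$ is explicit and continuous up to $(0,\infty)$; so your route trades the paper's reliance on that deep boundary-regularity theorem for a longer but more self-contained computation, needing only the existence of $\mu_s$, $s\in(0,1)$, and the subordination identity. Both proofs are valid.
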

\begin{proof} 
In these cases $F_{\textbf{b}_\alpha^\rho}(0) = \infty$, and hence $F_{\textbf{b}_\alpha^\rho}$ does not extend to a continuous function on $\mathbb{C}^+ \cup \mathbb{R}$. 
\end{proof}

It only remains to discuss the case $\frac{1}{2}<\alpha<1$. 

\begin{proposition} Let $\frac{1}{2}<\alpha<1$, then $\mathbf{b}^\rho_\alpha$ is not $\boxplus$-infinitely divisible unless $\alpha\leq\frac{2}{3}$ and $2-\frac{1}{\alpha}\leq\rho\leq\frac{1}{\alpha}-1$.
\end{proposition}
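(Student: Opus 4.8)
The plan is to reduce, via the reflection symmetry of the family, to a single range of the asymmetry parameter, and there to exhibit a genuine branch point of $F_{\mathbf{b}_\alpha^\rho}^{-1}$ lying \emph{inside} $\mathbb{C}^+$; such a branch point blocks the analytic continuation of the Voiculescu transform that free infinite divisibility would require.

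First I would normalize using $\mathbf{b}_\alpha^\rho\mapsto\mathbf{b}_\alpha^{1-\rho}$, the push-forward by $x\mapsto -x$, which preserves $ID(\boxplus)$; in terms of $\phi:=\alpha\rho\pi\in[0,\alpha\pi]$ this is the involution $\phi\mapsto\alpha\pi-\phi$, which fixes $[(2\alpha-1)\pi,(1-\alpha)\pi]$ setwise. A short case check then shows that, for $\tfrac12<\alpha<1$, the hypothesis ``$\alpha\le\tfrac23$ and $2-\tfrac1\alpha\le\rho\le\tfrac1\alpha-1$'' fails precisely when, after possibly applying this reflection, $\phi>(1-\alpha)\pi$: for $\tfrac12<\alpha\le\tfrac23$ that hypothesis says $\phi\in[(2\alpha-1)\pi,(1-\alpha)\pi]$, and $\phi<(2\alpha-1)\pi$ is carried by the reflection to $\phi'=\alpha\pi-\phi>(1-\alpha)\pi$; for $\alpha>\tfrac23$ the interval is empty and one of $\phi,\alpha\pi-\phi$ always exceeds $(1-\alpha)\pi$. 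Hence it suffices to prove: \emph{if $\tfrac12<\alpha<1$ and $\phi>(1-\alpha)\pi$, then $\mathbf{b}_\alpha^\rho\notin ID(\boxplus)$.}

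Suppose, for contradiction, that such a $\mathbf{b}_\alpha^\rho$ is freely infinitely divisible, and put $F(z)=z+e^{i\phi}z^{1-\alpha}$. Then the germ of $F^{-1}$ at $\infty$ normalized by $F^{-1}(z)\sim z$ extends to a single-valued analytic $H$ on all of $\mathbb{C}^+$ with $H(z)-z\in\overline{\mathbb{C}^-}$; since $F\circ H=\mathrm{id}$, $H$ is injective and $H(\mathbb{C}^+)$ is an injectivity domain of $F$ having $\infty$ on its boundary with $F\bigl(H(\mathbb{C}^+)\bigr)=\mathbb{C}^+$. Now $F'(z)=1+(1-\alpha)e^{i\phi}z^{-\alpha}$ vanishes when $z^\alpha=-(1-\alpha)e^{i\phi}$; for the branch $z_c:=(1-\alpha)^{1/\alpha}e^{i(\phi-\pi)/\alpha}$ the constraints $(1-\alpha)\pi<\phi\le\alpha\pi$ give $\arg z_c=\tfrac{\phi-\pi}{\alpha}\in(-\pi,0)$, so $z_c\in\mathbb{C}^-$; but $F(z_c)=-\tfrac{\alpha}{1-\alpha}z_c$ has argument $\pi+\tfrac{\phi-\pi}{\alpha}\in\bigl(0,(2-\tfrac1\alpha)\pi\,\bigr]\subset(0,\pi)$, so the critical value $v:=F(z_c)$ lies in $\mathbb{C}^+$ and is a branch point of $F^{-1}$. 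The claim to be established is that no injectivity domain of $F$ adjacent to $\infty$ can be mapped onto all of $\mathbb{C}^+$ once $\phi>(1-\alpha)\pi$, because the fold of $F$ over $v$ gets in the way.

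To verify this I would run the sheet construction of the preceding proposition in reverse. The injectivity domains of $F$ adjacent to $\infty$ are built from $F$ restricted to $\mathbb{C}_{(\theta_1,\pi)}$ and $\mathbb{C}_{(0,\theta_2)}$ with $\theta_1=-\phi/(1-\alpha)$, $\theta_2=(\pi-\phi)/(1-\alpha)$, glued along $\mathbb{C}^+$; when $\phi>(1-\alpha)\pi$ we have $\theta_1<-\pi$, the sector $\mathbb{C}_{(\theta_1,\pi)}$ has opening $>2\pi$, $F$ is no longer injective on it, $z_c$ lies in its interior, and $F$ folds a portion of it over a neighborhood of $v$. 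Tracking $F$ on the boundary rays — e.g.\ $F(re^{i\theta_1})=re^{i\theta_1}+r^{1-\alpha}$, whose imaginary part $r\sin\theta_1$ no longer has the sign it carried in the divisible case — and applying Proposition \ref{prop91} to the resulting Jordan curves, one shows that the image under $F$ of any injectivity domain abutting $\infty$ must wrap around $v$ and hence cannot cover $\mathbb{C}^+$; equivalently, along a suitable path from $\infty$ to some $z_0\in\mathbb{C}^+$ the continuation of $\phi_{\mathbf{b}_\alpha^\rho}(z)=F^{-1}(z)-z$ is forced out of $\overline{\mathbb{C}^-}$, contradicting the characterization of $ID(\boxplus)$. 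The main obstacle is precisely this geometric bookkeeping: pinning down $z_c$ and $v$ accurately relative to the $\mathbb{C}^+$-preimages of the two sheet maps, keeping careful track of the branch of $z^{1-\alpha}$, so as to certify that the fold over $v$ is unavoidable — and this is where the inequality $\phi>(1-\alpha)\pi$ is used essentially.
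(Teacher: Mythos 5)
Your symmetry reduction ($\rho\mapsto 1-\rho$, i.e.\ $\phi\mapsto\alpha\pi-\phi$, which preserves $ID(\boxplus)$) is fine, and your critical point $z_c=(1-\alpha)^{1/\alpha}e^{i(\phi-\pi)/\alpha}$ with critical value $v=-\tfrac{\alpha}{1-\alpha}z_c\in\mathbb{C}^+$ are exactly the objects the paper's proof runs on: $\arg z_c$ is the paper's angle $\theta_4=\tfrac{\phi-\pi}{\alpha}$, and $v$ is the tip of the slit (the point $z_0$, written there for the mirror case $\phi<(2\alpha-1)\pi$ with $\theta_3=\tfrac{\phi+\pi}{\alpha}$). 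The genuine gap is that you stop exactly where the proof has to happen. The mere existence of a critical value of $F$ in $\mathbb{C}^+$ does not contradict $\mathbf{b}_\alpha^\rho\in ID(\boxplus)$: the criterion requires a single-valued analytic continuation of the germ of $\phi_\mu=F^{-1}-z$ from a Stolz domain into $\mathbb{C}^+$, and a branch point of the multivalued inverse at $v$ is an obstruction only if one proves that the branch coming from $\infty$ actually runs into it; a priori the continuation could be regular at $v$, given there by a different, non-critical preimage. Your sketch defers precisely this point (``one shows that the image of any injectivity domain abutting $\infty$ must wrap around $v$''; ``the main obstacle is precisely this geometric bookkeeping''), so the assertion that the fold is unavoidable --- which is the entire content of the proposition --- is never established. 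Secondarily, the claim that $F\circ H=\mathrm{id}$ on all of $\mathbb{C}^+$ also needs justification, since continuing that identity requires $H(\mathbb{C}^+)$ to stay where the relevant branch of $F$ is defined.

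What closes the gap, and is how the paper argues: on the ray $l_{\theta_4}$ one has $F(re^{i\theta_4})=(r-r^{1-\alpha})e^{i\theta_4}$, so $F$ folds this ray onto $\{re^{i\theta_4}: r\geq -\alpha(1-\alpha)^{(1-\alpha)/\alpha}\}$, while $F$ maps $l_{\phi/\alpha}$ bijectively onto itself; repeating the ray-by-ray injectivity argument of Proposition \ref{prop90} shows that $F$ is univalent on the sector between $l_{\theta_4}$ and $l_{\phi/\alpha}$ and maps it onto that sector minus the segment from $0$ to $v$. Since the interior of this segment lies in $\mathbb{C}^+$, the one-sided limits of $F^{-1}$ at $v/2$ from the two sides of the slit are distinct points $r_1e^{i\theta_4}\neq r_2e^{i\theta_4}$ of the folded ray; any analytic extension of $\phi_\mu$ to $\mathbb{C}^+$ would have to agree with $F^{-1}-z$ on this image domain, so no continuous, hence no analytic, extension exists, contradicting free infinite divisibility. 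Until you supply this univalence-plus-slit-image computation (or an equivalent monodromy argument certifying that the continuation from $\infty$ is obstructed at $v$), the proposal does not prove the proposition.
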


\begin{proof}
Let $\phi:=\alpha\rho\pi$ and $F(z):=F_{\normalfont \textbf{b}_\alpha^\rho}(z)$. Suppose $\phi<(2\alpha-1)\pi$. 
A key quantity here is the angle $\theta_3:=\frac{\phi+\pi}{\alpha}$. It holds that $\pi<\theta_3 < 2\pi$ and 
\begin{eqnarray}
F(re^{i\theta_3})&=&re^{i\theta_3}+e^{i\phi}r^{1-\alpha} e^{i(1-\alpha)\theta_3}\\ \notag
&=& (r-r^{1-\alpha})e^{i\theta_3}.
\end{eqnarray}
Hence $F$ is not injective on $l_{\theta_3}$ and $F(l_{\theta_3}) = \{re^{i\theta_3}: r \geq -\alpha(1-\alpha)^{\frac{1-\alpha}{\alpha}}\}$. On the other hand, $F(re^{i\phi/\alpha}) = (r +r^{1-\alpha})e^{i\phi/\alpha}$ and then $F$ is bijective from $l_{\phi/\alpha}$ onto itself. Let us prove that the map $F$ is univalent in $\mathbb{C}_{(\phi/\alpha,\theta_3)}$. 

We fix $\theta\in (\phi/\alpha,\theta_3)$. Then $ \theta > \phi+ (1-\alpha)\theta > \theta -\pi$ and hence $F(re^{i\theta}) =re^{i\theta} + r^{1-\alpha}e^{i(\phi+(1-\alpha)\theta)} \in \mathbb{C}_{(\phi+(1-\alpha)\theta, \theta)}$. Since $\phi+(1-\alpha)\theta > \phi/\alpha$, $F(l_\theta)$ does not intersect $l_{\phi/\alpha}$ except at $0$. Similarly to the proof of Proposition \ref{prop90}, $F$ is injective on $l_{\theta}$, to conclude $F$ is univalent in $\mathbb{C}_{(\phi/\alpha, \theta)}$. 

Therefore, $F$ maps $\mathbb{C}_{(\phi/\alpha, \theta_3)}$ onto $\mathbb{C}_{(\phi/\alpha,\theta_3)} \setminus \{rz_0:  r \in [0,1] \}$, where $z_0 = -\alpha(1-\alpha)^{\frac{1-\alpha}{\alpha}}e^{i\theta_3}$. 
We can find $0< r_1<r_2$ such that $F(r_1e^{i\theta_3})=F(r_2e^{i\theta_3})=\frac{z_0}{2} \in \mathbb{C}^+$, so that 
 $$
 \lim_{\epsilon \rightarrow 0^+}F^{-1}\left(\frac{z_0}{2}+\epsilon\right)=r_1e^{i\theta_3}
 $$
but
$$
\lim_{\epsilon \rightarrow 0^-}F^{-1}\left(\frac{z_0}{2}+\epsilon \right)=r_2e^{i\theta_3}. 
$$
Hence the Voiculescu transform does not extend to $\mathbb{C}^+$ continuously. 

The case $\phi>(1-\alpha)\pi$ is analogous by choosing $\theta_4=\frac{\phi-\pi}{\alpha} \in (-\pi,0)$ instead of $\theta_3$.
 \end{proof}

\section{Classical infinite divisibility of positive Boolean stable laws}\label{S4}

For the convenience of the reader we start recalling basic facts on classical infinite divisibility and completely monotonic functions. 
For $\mu,\nu\in\mathcal{P}$, let $\mu\ast\nu$ denote the classical convolution of $\mu$ and $\nu$. 

\begin{definition} A probability measure $\mu$ is said to be \textit{$\ast$-infinitely divisible} if for each $n\in\mathbb{N}$ there exist $\mu_n$ such that $\mu=\mu_n^{\ast n}.$ We will denote by $ID(\ast)$ the set of $\ast$-infinitely divisible measures. 
\end{definition}

\begin{definition} A function $f: (0,\infty) \to \mathbb{R}$ is said to be \textit{completely monotonic (c.m.)}, if it possesses derivatives $f^{(n)}(x)$ for all $ n = 0, 1,2, 3,\dots$ and if
$$
(-1)^nf^{(n)}(x)\geq 0, ~~x > 0,~~ n=0,1,2,\dots.
$$
\end{definition}

The importance of complete monotonicity in relation to classical infinite divisibility is given by the following well known criterion (see e.g.\ \cite{Fell}).
\begin{theorem}
Let $\mu$ be a probability distribution on $[0,\infty)$ with absolutely continuous density w.r.t.\ the Lebesgue measure. If the density is c.m.\ then the measure is in $ID(\ast)$.
\end{theorem}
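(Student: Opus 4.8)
The plan is to realize the measure $\mu$ as a mixture of exponential distributions and then to invoke the classical fact that every such mixture is $\ast$-infinitely divisible.

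First I would apply Bernstein's theorem to the density $f:=d\mu/dx$: since $f$ is completely monotonic on $(0,\infty)$, there is a (unique) positive measure $\nu$ on $[0,\infty)$ such that
\[
f(x)=\int_{[0,\infty)} e^{-xt}\,\nu(dt),\qquad x>0 .
\]
The atom of $\nu$ at $0$ contributes the constant $\nu(\{0\})$ to $f$, so $f(x)\ge \nu(\{0\})$ for every $x>0$; since $f$ is a probability density, hence in $L^1(0,\infty)$, this forces $\nu(\{0\})=0$. By Tonelli,
\[
1=\int_0^\infty f(x)\,dx=\int_{(0,\infty)}\Big(\int_0^\infty e^{-xt}\,dx\Big)\nu(dt)=\int_{(0,\infty)}\frac{1}{t}\,\nu(dt),
\]
so that $\lambda(dt):=t^{-1}\nu(dt)$ is a probability measure on $(0,\infty)$ and $f(x)=\int_{(0,\infty)} t\,e^{-xt}\,\lambda(dt)$. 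Equivalently, $\mu=\int_{(0,\infty)}\mathrm{Exp}(t)\,\lambda(dt)$ is a mixture of the exponential distributions $\mathrm{Exp}(t)$ of rate $t$.

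It then remains to prove that an arbitrary mixture of exponential distributions is $\ast$-infinitely divisible; this is the theorem of Goldie and Steutel (it is recorded in \cite{Fell}). Concretely, writing the Laplace transform as $\varphi(s)=\int_{(0,\infty)}\frac{t}{t+s}\,\lambda(dt)$, one has $\varphi(0^+)=1$, and one must check that $-\varphi'/\varphi$ is completely monotonic, so that $-\log\varphi$ is a Bernstein function vanishing at $0$; then $\varphi^{1/n}=\exp\!\big(-\tfrac1n\log\varphi\big)$ is again the Laplace transform of a probability measure on $[0,\infty)$ for every $n$, which gives $\mu\in ID(\ast)$. I expect the main obstacle to be exactly this last verification: the complete monotonicity of $-\varphi'/\varphi$ does not follow formally from that of $\varphi$, and establishing it is the technical core of Goldie's argument, which I would cite rather than reproduce here.
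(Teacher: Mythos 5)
Your proposal is correct and matches the paper's treatment: the paper states this criterion as a well-known fact and simply cites Feller, and your argument (Bernstein's theorem to write the completely monotone density as a mixture of exponential densities, the $\nu(\{0\})=0$ and normalization checks, then the Goldie--Steutel theorem that exponential mixtures are $\ast$-infinitely divisible) is exactly the standard proof behind that citation. Deferring the infinite divisibility of exponential mixtures to the literature is no less complete than what the paper does, so there is no gap to repair.
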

 
We will use the following well known properties of c.m. functions.

\begin{proposition}\label{cm10}
(1) If $f(x)$ and $g(x)$ are c.m., then $f(x)g(x)$, $f(x)+g(x)$ are also c.m.  

(2) Let $f(x)$ be c.m.\ and $h(x)$ be a positive differentiable function. If $h'(x)$ is c.m.\ then $f(h(x))$ is also a c.m.\ function.   
\end{proposition}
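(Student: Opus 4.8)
The plan is to reduce both parts to the Leibniz product rule together with one elementary observation: if $f$ is c.m., then $-f'$ is c.m. as well (indeed $(-1)^n(-f')^{(n)}=(-1)^{n+1}f^{(n+1)}\ge0$). For part (1), the statement for sums is immediate from linearity of differentiation, $(-1)^n(f+g)^{(n)}=(-1)^nf^{(n)}+(-1)^ng^{(n)}\ge0$. For products I would expand by Leibniz,
\[
(-1)^n(fg)^{(n)}=\sum_{k=0}^n\binom{n}{k}\big[(-1)^kf^{(k)}\big]\big[(-1)^{n-k}g^{(n-k)}\big],
\]
and note that each summand is a product of two nonnegative quantities, so the sum is nonnegative. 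No induction is needed here.

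For part (2), I would first record that $f\circ h$ is smooth on $(0,\infty)$: a c.m.\ function is $C^\infty$ by definition, so $h'$ is $C^\infty$, hence $h$ and therefore $f\circ h$ are $C^\infty$; positivity of $h$ is what guarantees $h(x)$ lies in the domain $(0,\infty)$ of $f$, so the composition is well defined. The main point is to show $(-1)^n(f\circ h)^{(n)}\ge0$ for every $n$, which I would prove by (strong) induction on $n$, with the inductive statement quantified over \emph{all} c.m.\ functions $f$ (with $h$ fixed), since the inductive step replaces $f$ by another c.m.\ function. The base case $n=0$ is $f(h(x))\ge0$. For the step, set $g:=-f'$, which is c.m., and write $(f\circ h)'=(f'\circ h)\cdot h'=-\big((g\circ h)\cdot h'\big)$; differentiating $n-1$ further times by Leibniz and collecting signs gives
\[
(-1)^n(f\circ h)^{(n)}=\sum_{i=0}^{n-1}\binom{n-1}{i}\big[(-1)^i(g\circ h)^{(i)}\big]\big[(-1)^{n-1-i}(h')^{(n-1-i)}\big].
\]
Here each factor $(-1)^i(g\circ h)^{(i)}$ is $\ge0$ by the inductive hypothesis applied to the c.m.\ function $g$ (as $i\le n-1$), and each factor $(-1)^{n-1-i}(h')^{(n-1-i)}$ is $\ge0$ because $h'$ is c.m.; hence the whole sum is $\ge0$, completing the induction.

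I do not expect a serious obstacle; the only points needing care are (i) phrasing the induction so its hypothesis ranges over all c.m.\ $f$, because the step passes from $f$ to $-f'$, and (ii) the sign bookkeeping in the Leibniz expansion, which rests on the identity $(-1)^{n+1}=(-1)^i(-1)^{n-1-i}$ for $0\le i\le n-1$. As an alternative route one could invoke Bernstein's theorem, using the representations $f(x)=\int_0^\infty e^{-xt}\,\mu(dt)$ and $h(x)=a+bx+\int_0^\infty(1-e^{-xt})\,\nu(dt)$ and the fact that $e^{-h}$ is c.m.\ whenever $h$ is a Bernstein function, then integrating in $t$; but the inductive argument above is more self-contained.
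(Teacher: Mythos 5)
Your proof is correct. Note, however, that the paper offers no proof of this proposition at all: it is stated as a collection of ``well known properties'' of completely monotonic functions (standard references being Feller's book, or Bernstein-function monographs), so there is nothing in the paper to compare your argument against line by line. Your self-contained treatment is sound: part (1) for sums is linearity, for products the Leibniz expansion with the sign split $(-1)^n=(-1)^k(-1)^{n-k}$ does the job; and part (2) is handled by the right induction, namely strong induction on the order $n$ of the derivative with the hypothesis quantified over \emph{all} c.m.\ functions $f$ (keeping $h$ fixed), which is exactly the care needed because the inductive step replaces $f$ by $-f'$. The sign bookkeeping $(-1)^{n-1}=(-1)^i(-1)^{n-1-i}$ is right, the base case uses positivity of $h$ to keep $h(x)$ in the domain of $f$, and smoothness of the composition follows since $h'$ c.m.\ forces $h\in C^\infty$. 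The alternative route you sketch via Bernstein's theorem (representing $f$ as a Laplace transform and $h$ as a Bernstein function, then using that $e^{-th}$ is c.m.\ for $t\ge 0$ and integrating in $t$) is in fact the standard textbook derivation of part (2); your inductive argument avoids the integral representation entirely and is arguably more elementary, at the cost of the slightly delicate formulation of the inductive hypothesis. Either way, the statement is fully justified.
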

The density of the Boolean stable law on $[0,\infty)$ can be  calculated as 
$$
\textbf{b}_\alpha^1 (dx)= \frac{1}{\pi} \frac{\sin(\alpha\pi) x^{\alpha-1}}{x^{2\alpha}+2\cos(\alpha\pi)x^\alpha +1}dx,~~~x>0,   
$$ 
for $\alpha \in (0,1)$. This density is c.m.\ for $\alpha \leq 1/2$ by basic properties of c.m.\ functions, see Proposition \ref{cm10}. Indeed, the function $h(x)=x^{2\alpha}+2 \cos(\alpha \pi)x^\alpha + 1$ has a c.m.\ derivative $h'(x)$, so that $\frac{1}{h(x)}$ is c.m. The function $\frac{x^{\alpha-1}}{h(x)}$ is the multiplication of two c.m.\ functions and hence is c.m. 
Thus we have shown the classical infinite divisibility together with the free infinite divisibility: 

\begin{theorem} The Boolean stable law ${\normalfont \bf b}_\alpha^1$ on $[0,\infty)$ belongs to $ID(\boxplus)\cap ID(\ast)$ for $\alpha \leq 1/2$. 
\end{theorem}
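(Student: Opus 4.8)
The plan is to combine two facts that have essentially already been assembled in the excerpt: the free infinite divisibility of $\mathbf{b}_\alpha^1$ for $\alpha\le 1/2$, which is contained in Proposition \ref{prop90} (with $\rho=1$) together with Proposition \ref{lem1}, and the classical infinite divisibility, which will follow from the explicit density formula and the complete-monotonicity criterion stated just above. So the proof is really a matter of correctly citing the two halves and checking the c.m.\ computation.

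First I would recall the density
\[
\mathbf{b}_\alpha^1(dx)= \frac{1}{\pi}\,\frac{\sin(\alpha\pi)\,x^{\alpha-1}}{x^{2\alpha}+2\cos(\alpha\pi)x^\alpha+1}\,dx,\qquad x>0,
\]
valid for $\alpha\in(0,1)$, and observe that for $\alpha\le 1/2$ all the coefficients are nonnegative: $\sin(\alpha\pi)\ge 0$ and $\cos(\alpha\pi)\ge 0$. Then I would verify complete monotonicity by the two steps indicated in Proposition \ref{cm10}. Set $h(x)=x^{2\alpha}+2\cos(\alpha\pi)x^\alpha+1$. Its derivative $h'(x)=2\alpha x^{2\alpha-1}+2\alpha\cos(\alpha\pi)x^{\alpha-1}$ is a sum of functions of the form $c\,x^{-s}$ with $c\ge 0$ and $s=1-2\alpha\ge 0$ (using $\alpha\le 1/2$) or $s=1-\alpha>0$; each such power function is c.m.\ on $(0,\infty)$, so $h'$ is c.m. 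Since $h$ is positive and differentiable with c.m.\ derivative, part (2) of Proposition \ref{cm10} applied to the c.m.\ function $f(t)=1/t$ shows $1/h(x)$ is c.m. Finally $x^{\alpha-1}$ is c.m.\ (again a nonnegative power of $x^{-1}$, as $\alpha-1<0$), and by part (1) the product $x^{\alpha-1}/h(x)$ is c.m.; multiplying by the positive constant $\sin(\alpha\pi)/\pi$ keeps it c.m. Hence the density is c.m., and by the classical criterion quoted above, $\mathbf{b}_\alpha^1\in ID(\ast)$.

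For the free part I would simply note that Proposition \ref{prop90} gives $\mathbf{b}_\alpha^1\in\mathcal{UI}$ for $0<\alpha\le 1/2$, and Proposition \ref{lem1} then yields $\mathbf{b}_\alpha^1\in ID(\boxplus)$. Combining, $\mathbf{b}_\alpha^1\in ID(\boxplus)\cap ID(\ast)$ for every $\alpha\le 1/2$, which is the assertion. I do not expect a genuine obstacle here; the only point requiring care is making sure the c.m.\ bookkeeping uses $\alpha\le 1/2$ in exactly the two places it is needed — to get $\cos(\alpha\pi)\ge 0$ so that the coefficients of $h$ and $h'$ are nonnegative, and to get $2\alpha-1\le 0$ so that $x^{2\alpha-1}$ is a genuinely nonincreasing (indeed c.m.) power. (At $\alpha=1/2$ the density reduces to $\tfrac1\pi\,x^{-1/2}/(x+1)$, which is visibly c.m., consistent with the general argument.) One could alternatively phrase the c.m.\ check via the partial fraction decomposition of $1/(x^{2\alpha}+2\cos(\alpha\pi)x^\alpha+1)$ in the variable $x^\alpha$, but the route through Proposition \ref{cm10} is shorter and is the one already sketched in the surrounding text.
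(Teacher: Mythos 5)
Your proposal is correct and matches the paper's argument: the classical part is exactly the paper's complete-monotonicity computation for the density via Proposition \ref{cm10} (with the same use of $\cos(\alpha\pi)\ge 0$ and $2\alpha-1\le 0$), and the free part is cited from Proposition \ref{prop90} together with Proposition \ref{lem1}, just as in the paper. No gaps; your extra bookkeeping about where $\alpha\le 1/2$ enters is a welcome clarification but not a different route.
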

We note that $\textbf{b}_\alpha^{1}$ is a \textit{generalized gamma convolution} (GGC) which is a class of $ID(\ast)$. This fact follows from Theorem 2 of \cite{Bon}. 
 
More generally the complete monotonicity is still true for continuous Boolean convolutions of Boolean stable laws $\textbf{b}(\sigma)$ and hence one concludes the following:  
\begin{theorem} \label{mix2}
The continuous Boolean convolution ${\normalfont  \bf b}(\sigma) = \int^{\uplus}_{(0,\frac{1}{2}]} {\normalfont \bf b}_\alpha^1 \sigma(d\alpha)$ 
 is in $ID(\boxplus)\cap ID(\ast)$ for any non-negative finite measure $\sigma$ on $(0,\frac{1}{2}]$. 
\end{theorem}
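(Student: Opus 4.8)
The free infinite divisibility is already contained in Theorem~\ref{mix1}: there $\mathbf{b}(\sigma)\in\mathcal{UI}$, hence $\mathbf{b}(\sigma)\in ID(\boxplus)$ by Proposition~\ref{lem1}, and moreover $\mathbf{b}(\sigma)$ is free regular and therefore supported on $[0,\infty)$. So the only point left is $\mathbf{b}(\sigma)\in ID(\ast)$ (the case $\sigma=0$, where $\mathbf{b}(\sigma)=\delta_0$, being trivial), and by the complete-monotonicity criterion for classical infinite divisibility recalled above it suffices to exhibit a completely monotonic density for $\mathbf{b}(\sigma)$. The plan is to compute that density by Stieltjes inversion and then to factor it into completely monotonic pieces, just as was done above for a single $\mathbf{b}_\alpha^1$; the only new feature is the ``cross terms'' produced by the double integral.

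\medskip

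First I would record the density. Since the principal branch $z\mapsto z^{1-\alpha}$ is analytic on $\mathbb{C}\setminus(-\infty,0]$ and, on each compact subset there, bounded uniformly in $\alpha\in(0,\tfrac12]$, the function $F:=F_{\mathbf{b}(\sigma)}$ extends analytically across $(0,\infty)$; moreover $\operatorname{Re}F(x)=x+\int_{(0,1/2]}\cos(\pi\alpha)\,x^{1-\alpha}\,\sigma(d\alpha)\ge x>0$ there, so $G_{\mathbf{b}(\sigma)}=1/F$ is analytic in a neighbourhood of $(0,\infty)$ and in particular $\operatorname{Im}G_{\mathbf{b}(\sigma)}$ extends continuously to $(0,\infty)$. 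Since $\mathbf{b}(\sigma)$ is supported on $[0,\infty)$ and $\mathbf{b}(\sigma)(\{0\})=0$ (an elementary estimate on $F(iy)$ as $y\downarrow 0$), it follows that $\mathbf{b}(\sigma)$ is purely Lebesgue-absolutely continuous with
\[
p(x)=-\frac{1}{\pi}\operatorname{Im}G_{\mathbf{b}(\sigma)}(x)=\frac{1}{\pi}\,\frac{\operatorname{Im}F(x)}{|F(x)|^{2}},\qquad x>0.
\]
Here $\operatorname{Im}F(x)=\int_{(0,1/2]}\sin(\pi\alpha)\,x^{1-\alpha}\,\sigma(d\alpha)$, and expanding $|F(x)|^{2}=(\operatorname{Re}F(x))^{2}+(\operatorname{Im}F(x))^{2}$ gives
\[
|F(x)|^{2}=x^{2}+2\int_{(0,1/2]}\cos(\pi\alpha)\,x^{2-\alpha}\,\sigma(d\alpha)+\int_{(0,1/2]}\int_{(0,1/2]}\cos\!\big(\pi(\alpha-\beta)\big)\,x^{2-\alpha-\beta}\,\sigma(d\alpha)\,\sigma(d\beta).
\]

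\medskip

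The decisive step is to cancel one power of $x$ from numerator and denominator, writing $p(x)=\tfrac{1}{\pi}\,g_\sigma(x)/h_\sigma(x)$ with
\[
g_\sigma(x)=\frac{\operatorname{Im}F(x)}{x}=\int_{(0,1/2]}\sin(\pi\alpha)\,x^{-\alpha}\,\sigma(d\alpha),\qquad h_\sigma(x)=\frac{|F(x)|^{2}}{x}.
\]
Now $g_\sigma$ is a superposition, with the non-negative weights $\sin(\pi\alpha)$, of the completely monotonic functions $x^{-\alpha}$ $(\alpha\in(0,\tfrac12])$, hence is completely monotonic; and $h_\sigma>0$ on $(0,\infty)$. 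By the formula for $|F(x)|^{2}$, $h_\sigma$ is a superposition, with coefficients $1$, $2\cos(\pi\alpha)$, $\cos(\pi(\alpha-\beta))$, of the powers $x$, $x^{1-\alpha}$, $x^{1-\alpha-\beta}$. It is here that the hypothesis $\operatorname{supp}\sigma\subseteq(0,\tfrac12]$ is used: it forces $\cos(\pi\alpha)\ge 0$ and, since then $|\alpha-\beta|<\tfrac12$, also $\cos(\pi(\alpha-\beta))>0$, and it keeps every exponent of $x$ occurring in $h_\sigma$ inside $[0,1]$. Hence $h_\sigma'$ is a superposition, again with non-negative coefficients, of the powers $x^{\gamma-1}$ with $\gamma\in[0,1]$, which is completely monotonic. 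Proposition~\ref{cm10}(2) with $f(y)=1/y$ then shows $1/h_\sigma$ is completely monotonic, and Proposition~\ref{cm10}(1) shows $p=\tfrac{1}{\pi}\,g_\sigma\cdot(1/h_\sigma)$ is completely monotonic. Thus $\mathbf{b}(\sigma)\in ID(\ast)$, which together with the first paragraph yields $\mathbf{b}(\sigma)\in ID(\boxplus)\cap ID(\ast)$.

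\medskip

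The step I expect to carry all the content is the sign-and-exponent bookkeeping for $h_\sigma$: as soon as $\operatorname{supp}\sigma$ leaves $(0,\tfrac12]$, either some $\cos(\pi\alpha)$ becomes negative or some exponent $1-\alpha$ or $1-\alpha-\beta$ leaves $[0,1]$, and then $h_\sigma'$ need no longer be completely monotonic, so this argument really does use $\alpha\le\tfrac12$. The other ingredients — differentiation under the integral sign in $h_\sigma$, the fact that superpositions of completely monotonic functions against finite positive measures are completely monotonic, and the measure-theoretic claim that $\mathbf{b}(\sigma)$ is purely absolutely continuous with the stated density — are routine, resting on Bernstein's theorem, Fubini, and the Stieltjes--Perron inversion formula.
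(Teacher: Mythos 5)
Your proof is correct and follows essentially the same route as the paper: free infinite divisibility is quoted from Theorem \ref{mix1}, and classical infinite divisibility is obtained by writing the density as $\frac{1}{\pi}\,g_\sigma(x)/h_\sigma(x)$ and checking complete monotonicity via Proposition \ref{cm10}, exactly the paper's factorization. The only difference is technical: the paper first treats atomic $\sigma$ and then passes to general $\sigma$ by a locally uniform approximation of derivatives, whereas you handle general $\sigma$ directly by expanding $|F(x)|^{2}/x$ as a double integral with non-negative coefficients $\cos(\pi(\alpha-\beta))$ and exponents in $[0,1]$ --- a slightly more direct argument, and you additionally justify the absolute continuity and the density formula, which the paper states without proof.
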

\begin{proof} The density of $\textbf{b}(\sigma)$ is given by 
$$
\frac{1}{\pi}\cdot\frac{\int_{(0,\frac{1}{2}]}\sin(\alpha\pi)x^{-\alpha}\sigma(d\alpha)}{\left(x^{1/2}+\int_{(0,\frac{1}{2}]}\cos(\alpha\pi)x^{1/2-\alpha}\sigma(d\alpha)\right)^2 +\left(\int_{(0,\frac{1}{2}]}\sin(\alpha\pi)x^{1/2-\alpha}\sigma(d\alpha)\right)^2},~~~x>0. 
$$
We first consider the case where $\sigma$ is a finite sum of atoms: $\sigma= \sum_{k=1}^m \lambda_k \delta_{\alpha_k}$, $\alpha_k \in (0,\frac{1}{2}]$, $\lambda_k > 0$. Then the density of $\textbf{b}(\sigma)$ is given by $\frac{f(x)}{\pi g(x)}$, where 
\[
\begin{split}
&f(x) = \sum_{k=1}^m \lambda_k \sin(\alpha_k\pi) x^{-\alpha_k},\\
&g(x) = \left(x^{1/2}+\sum_{k=1}^m \lambda_k \cos(\alpha_k\pi) x^{1/2-\alpha_k}\right)^2  + \left(\sum_{k=1}^m \lambda_k \sin(\alpha_k\pi) x^{1/2-\alpha_k}\right)^2. 
\end{split}
\]

The function $f(x)$ is c.m. $g(x)$ is a finite sum of $\{x^{\beta}:\beta \in [0,1]\}$ with positive coefficients and hence $g'(x)$ is c.m. Therefore, the density $\frac{f(x)}{\pi g(x)}$ is c.m. 

For a general finite Borel measure $\sigma$, consider $\sigma_n:= \sum_{k=0}^{n-1} \sigma\left((\frac{k}{2n}, \frac{k+1}{2n}]\right) \delta_{\frac{2k+1}{4n}}$. 
Then $\frac{d^p}{dx^p}\int_{(0,\frac{1}{2}]}e^{i\alpha \pi}x^{1/2-\alpha}\sigma_n(d\alpha)$ converges to  $\frac{d^p}{dx^p}\int_{(0,\frac{1}{2}]}e^{i\alpha\pi}x^{1/2-\alpha}\sigma(d\alpha)$ locally uniformly on $(0,\infty)$ for any integer $p \geq 0$. This approximation argument shows that the denominator of the density of $\textbf{b}(\sigma)$ has c.m.\ derivative, and hence $\textbf{b}(\sigma)$ itself has c.m.\ density. 
\end{proof}

\section{Relations between Boolean, free and classical stable laws and multiplicative convolutions}\label{S2}
We prove a reproducing property of Boolean stable laws with respect to the free multiplicative convolution. 
The\textit{\ free multiplicative convolution} $\boxtimes$ of probability measures with bounded support on $[0,\infty)$ is defined as follows~\cite{Voi87}. Let $X_1,X_2$ be positive free random variables following distributions $\mu_1,\mu_2$, respectively. Then the free multiplicative convolution $\mu_1 \boxtimes \mu_2$ is defined by the distribution of $X^{1/2}YX^{1/2}$. 
More generally, free multiplicative convolution $\mu_1 \boxtimes \mu_2$ can be defined for any two probability measures $\mu_1$ and $\mu_2$ on $\mathbb{R}$, provided that one of them is supported on $[0,\infty)$; see \cite{BeVo93}. 

Let $\textbf{s}_\alpha^\rho$ be the free strictly stable law characterized by
$$
\phi_{\textbf{s}_\alpha^\rho}(z) = -e^{i\alpha\rho\pi}z^{1-\alpha}, ~~z \in \mathbb{C}^+, 
$$ 
$\alpha \in (0,1], \rho\in [0,1]$.  
Results of \cite{APA} and \cite{BePa} show a reproducing property for $\boxplus$-strictly stable laws $\mathbf{s}^\theta_{\alpha}$: 
\begin{equation}\label{eq13}
\mathbf{s}^\rho_{1/(1+t)}\boxtimes\mathbf{s}^1_{1/(1+s)}=\mathbf{s}^\rho_{1/(1+s+t)}
\end{equation}
for $\rho \in \{0,\frac{1}{2}, 1 \}$. This property is still unknown for general $\rho \in [0,1]$.  

We show that the same reproducing property holds for $\textbf{b}_\alpha^\rho$, including a new result for general $\rho \in [0,1]$.  

\begin{proposition} (1) For any $s,t>0$ and $\rho \in \{0,\frac{1}{2},1\}$,  $\mathbf{b}^\rho_{1/(1+t)}\boxtimes\mathbf{b}^1_{1/(1+s)} =\mathbf{b}^\rho_{1/(1+s+t)}$. 

(2) For any $s,t >0$ and $\rho \in [0,1]$, $\mathbf{s}^\rho_{1/(1+t)}\boxtimes\mathbf{s}^1_{1/(1+s)}$ is a $\boxplus$-strictly stable law with stability index $\frac{1}{1+s+t}$ and $\mathbf{b}^\rho_{1/(1+t)}\boxtimes\mathbf{b}^1_{1/(1+s)}$ is a $\uplus$-strictly stable law with stability index $\frac{1}{1+s+t}$. 
\end{proposition}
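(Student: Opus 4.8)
The plan is to compute the relevant transforms explicitly and reduce everything to the scalar functional equation for the $S$-transform. First I would recall that for measures on $[0,\infty)$ the free multiplicative convolution is linearized by the $S$-transform: $S_{\mu\boxtimes\nu}=S_\mu S_\nu$. So the whole statement becomes a computation of $S$-transforms of $\mathbf{b}^\rho_\alpha$ and $\mathbf{s}^\rho_\alpha$, followed by algebra. For part (1), the key input is the $S$-transform of the positive Boolean stable law $\mathbf{b}^1_\beta$, which one can extract from its energy function $K(z)=-e^{i\pi\beta}z^{1-\beta}$ (case i) with $\rho=1$): inverting $F_{\mathbf{b}^1_\beta}(z)=z+e^{i\pi\beta}z^{1-\beta}$ and passing to $\psi_\mu(z)=G_\mu(1/z)^{-1}$-type manipulations gives $S_{\mathbf{b}^1_\beta}$ as (a constant times) a power of $z/(1+z)$ or similar; I expect the clean statement $S_{\mathbf{b}^1_{1/(1+s)}}(z)=c_s\,(-z)^{-s}(1+z)^{s}$ up to a phase, and likewise for $\mathbf{b}^\rho_{1/(1+t)}$ with the asymmetry feeding into the constant/phase only. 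Multiplying $S_{\mathbf{b}^\rho_{1/(1+t)}}\cdot S_{\mathbf{b}^1_{1/(1+s)}}$ then matches $S_{\mathbf{b}^\rho_{1/(1+s+t)}}$ by adding exponents $s+t$, proving (1) for $\rho\in\{0,\tfrac12,1\}$ (the restriction on $\rho$ here being exactly that of (\ref{eq13}), i.e. needed so that the phase bookkeeping closes up and the right-hand side is a genuine Boolean stable law with the same $\rho$).

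For part (2) with general $\rho\in[0,1]$, the point is weaker: we no longer claim the product is $\mathbf{b}^\rho_{1/(1+s+t)}$ on the nose, only that it is \emph{some} $\uplus$-strictly stable law with index $\tfrac{1}{1+s+t}$ (its $\rho$ may differ). So I would argue as follows. Compute the $S$-transforms of $\mathbf{b}^\rho_{1/(1+t)}$ and of $\mathbf{b}^1_{1/(1+s)}$ directly from the energy functions; their product $S(z)$ will again be of the form (constant)$\times (-z)^{-(s+t)}(1+z)^{s+t}$, with some unimodular constant absorbing all the $\rho$-dependence. The measure with this $S$-transform exists (one checks the product of two positive-measure $S$-transforms is again the $S$-transform of a probability measure on $[0,\infty)$, since $\mathbf{b}^1_{1/(1+s)}$ is compactly-supportable enough / one invokes the general $\boxtimes$ theory of \cite{BeVo93}), and by reversing the computation one sees its energy function is $K(z)=(\text{const})\,z^{1-\gamma}$ with $\gamma=\tfrac{1}{1+s+t}$, which is exactly the form of a $\uplus$-strictly stable energy function from case i). Hence the result is $\uplus$-strictly stable of index $\gamma$. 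The free-stable assertion $\mathbf{s}^\rho_{1/(1+t)}\boxtimes\mathbf{s}^1_{1/(1+s)}$ is entirely parallel but easier, using the $S$-transform of free stable laws (e.g. from \cite{APA}): $\phi_{\mathbf{s}^\rho_\alpha}(z)=-e^{i\alpha\rho\pi}z^{1-\alpha}$ translates to an $S$-transform that is again a constant times a power, and the same exponent-addition argument applies.

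The main obstacle I anticipate is the \emph{phase and branch bookkeeping}: the $S$-transform of an asymmetric stable law involves fractional powers $(-z)^{-\beta}$, $(1+z)^{\beta}$ whose branches must be tracked consistently on $\mathbb C\setminus[0,\infty)$ (or wherever the $S$-transform naturally lives), and the asymmetry coefficient $\rho$ enters precisely through a factor like $e^{i\pi(\text{something})\rho}$. For $\rho\in\{0,\tfrac12,1\}$ this factor is real or a sign, so it recombines into an admissible asymmetry coefficient of the \emph{same} type, giving the exact identity in (1); for general $\rho$ it need not, which is why (2) only claims strict stability of the correct index. A secondary technical point is justifying the use of the $S$-transform (and its multiplicativity) for these non-compactly-supported, possibly non-symmetric measures — here one should cite the extension of $\boxtimes$ and of the $S$-transform machinery in \cite{BeVo93}, noting that $\mathbf{b}^1_{1/(1+s)}$ and $\mathbf{s}^1_{1/(1+s)}$ are supported on $[0,\infty)$ so the convolution is well defined, and that $\mathbf{b}^1_\beta$ has an everywhere-nonzero $\psi$-function on the relevant domain so its $S$-transform is genuinely defined. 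Once those foundational points are in place, the computation is routine.
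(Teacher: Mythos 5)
Your plan for part (2) has a genuine gap. You propose to prove both assertions of (2), for \emph{general} $\rho\in[0,1]$, by computing $S$-transforms of $\mathbf{b}^\rho_{1/(1+t)}$ and $\mathbf{s}^\rho_{1/(1+t)}$ and multiplying them. But for $\rho\notin\{0,\tfrac12,1\}$ these measures are neither supported on $[0,\infty)$ nor symmetric, and no $S$-transform with the multiplicative property $S_{\mu\boxtimes\nu}=S_\mu S_\nu$ is available for such measures: the positive-measure theory of \cite{Voi87,BeVo93} does not apply (only the \emph{existence} of $\mu\boxtimes\nu$ when one factor is positive is covered there), the symmetric extension of \cite{APA} does not apply, and extensions based on nonzero mean fail because these stable laws have no first moment. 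This is not a technicality you can discharge by ``phase and branch bookkeeping'': it is exactly the obstruction the paper itself points to when it says that the identity (\ref{eq13}) is still unknown for general $\rho$ --- if one could write down and multiply an $S$-transform of $\mathbf{s}^\rho_\alpha$ for all $\rho$, that open question would be settled by your same computation. So the step ``compute the $S$-transform of $\mathbf{b}^\rho_{1/(1+t)}$ \dots their product will again be of the form (constant)$\times(-z)^{-(s+t)}(1+z)^{s+t}$'' is unjustified precisely in the cases part (2) is about, and your remark that one ``checks the product of two positive-measure $S$-transforms\dots'' does not help, since the $\rho$-factor is not a positive measure.

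The paper avoids transforms altogether in (2): it uses the Belinschi--Nica identity $(\mu\boxtimes\nu)^{\uplus 2}=D_{1/2}\bigl(\mu^{\uplus 2}\boxtimes\nu^{\uplus 2}\bigr)$ (valid as soon as one factor is supported on $[0,\infty)$, which $\mathbf{b}^1_{1/(1+s)}$ is), together with $\uplus$-strict stability of each factor, to get $(\mathbf{b}^\rho_{1/(1+t)}\boxtimes\mathbf{b}^1_{1/(1+s)})^{\uplus 2}=D_{2^{1+s+t}}(\mathbf{b}^\rho_{1/(1+t)}\boxtimes\mathbf{b}^1_{1/(1+s)})$, which is the definition of $\uplus$-strict stability with index $\tfrac{1}{1+s+t}$; the free case is parallel via $(\mu\boxtimes\nu)^{\boxplus 2}=D_{1/2}(\mu^{\boxplus 2}\boxtimes\nu^{\boxplus 2})$. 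For part (1) your route (direct $S$-transform computation for $\rho\in\{0,\tfrac12,1\}$, using the symmetric $S$-transform of \cite{APA} when $\rho=\tfrac12$) is plausible but amounts to re-proving (\ref{eq13}) at the Boolean level with nontrivial branch bookkeeping; the paper instead simply applies the injective $\boxtimes$-homomorphism $\mathbb{B}_1$ (which sends $\mathbf{b}^\rho_\alpha$ to $\mathbf{s}^\rho_\alpha$) to transfer the already known free identity (\ref{eq13}) to the Boolean side. You should either adopt the dilation/stability argument for (2) or restrict your $S$-transform argument to the cases where the transform is actually defined.
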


\begin{proof} 
(1) Since $\mathbb{B}_1$ is the Bercovici-Pata bijection from the Boolean convolution to the free one,  $\mathbb{B}_1(\textbf{b}_\alpha^\rho) = \textbf{s}_\alpha^\rho$. Moreover, $\mathbb{B}_1$ is a homomorphism with respect to $\boxtimes$~\cite{BN}, and hence (\ref{eq13}) implies    
$$
\mathbb{B}_1(\textbf{b}_{1/(1+t)}^\rho \boxtimes \textbf{b}_{1/(1+s)}^1) = \mathbb{B}_1(\textbf{b}_{1/(1+s+t)}^\rho), 
$$
and hence $\mathbf{b}^\rho_{1/(1+t)}\boxtimes\mathbf{b}^1_{1/(1+s)} =\mathbf{b}^\rho_{1/(1+s+t)}$ from the injectivity of $\mathbb{B}_1$.  

(2) 
Suppose $\mu$ or $\nu$ is supported on $[0,\infty)$. Then the relation  
\begin{equation}\label{eq23}
(\mu \boxtimes \nu)^{\uplus t} = D_{1/t} (\mu^{\uplus t} \boxtimes \nu^{\uplus t})   
\end{equation}
holds for $t>0$, while 
\begin{equation}
(\mu \boxtimes \nu)^{\boxplus t} = D_{1/t} (\mu^{\boxplus t} \boxtimes \nu^{\boxplus t})
\end{equation}
holds for $t\geq 1$; see Proposition 3.5 of \cite{BN}.  Using (\ref{eq23}), 
\[
\begin{split}
(\mathbf{b}^\rho_{1/(1+t)}\boxtimes\mathbf{b}^1_{1/(1+s)})^{\uplus 2}
&=D_{1/2}\left((\mathbf{b}^\rho_{1/(1+t)})^{\uplus 2}\boxtimes(\mathbf{b}^1_{1/(1+s)})^{\uplus 2}\right)\\
&=D_{1/2}\left(D_{2^{1+t}}(\mathbf{b}^\rho_{1/(1+t)})\boxtimes D_{2^{1+s}}(\mathbf{b}^1_{1/(1+s)})\right)\\
&=D_{2^{1+t+s}}(\mathbf{b}^\rho_{1/(1+t)}\boxtimes\mathbf{b}^1_{1/(1+s)}). 
\end{split}
\] 
By definition, $\mathbf{b}^\rho_{1/(1+t)}\boxtimes\mathbf{b}^1_{1/(1+s)}$ is $\uplus$-strictly stable with stability index $\frac{1}{1+s+t}$. The proof for the free case is similar. 
\end{proof}

This reproducing property combined with the fact $\mathbf{b}^\rho_{1/2} \in ID(\boxplus)$ shows that some Boolean stable laws are in $ID(\boxplus)$, giving an alternative look at Theorem \ref{T1}. 

\begin{corollary}
The Boolean stable law $\mathbf{b}_{1/n}^\rho$ is freely infinitely divisible if $n \in \{2,3,4,\cdots \}$ and 
$\rho \in \{0,\frac{1}{2},1\}$.
\end{corollary}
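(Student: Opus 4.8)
The plan is to derive the corollary purely from the reproducing property of part (1) of the preceding Proposition, together with Proposition \ref{T2}, Theorem \ref{mix1} and Theorem \ref{closure}, thus obtaining an alternative route to (part of) Theorem \ref{T1} that bypasses the univalence analysis. Fix $\rho\in\{0,\tfrac12,1\}$.

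First I would dispatch the base case $n=2$: this is exactly the statement $\mathbf{b}_{1/2}^\rho\in ID(\boxplus)$ from Proposition \ref{T2}. For $n\geq 3$, I would invoke the reproducing property $\mathbf{b}^\rho_{1/(1+t)}\boxtimes\mathbf{b}^1_{1/(1+s)}=\mathbf{b}^\rho_{1/(1+s+t)}$ with the choice $t=1$ and $s=n-2>0$, which gives
\[
\mathbf{b}_{1/2}^\rho\boxtimes\mathbf{b}_{1/(n-1)}^1=\mathbf{b}_{1/n}^\rho .
\]
The factor $\mathbf{b}_{1/(n-1)}^1$ is a positive Boolean stable law whose index $\tfrac{1}{n-1}$ lies in $(0,\tfrac12]$; since $F_{\mathbf{b}(\delta_\alpha)}(z)=z+e^{i\alpha\pi}z^{1-\alpha}=F_{\mathbf{b}_\alpha^1}(z)$, it coincides with the continuous Boolean convolution $\mathbf{b}(\delta_{1/(n-1)})$, so Theorem \ref{mix1} shows it is free regular. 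The other factor $\mathbf{b}_{1/2}^\rho$ is in $ID(\boxplus)$ by Proposition \ref{T2}. Using commutativity of $\boxtimes$ to match the order of factors, Theorem \ref{closure} applied to $\mathbf{b}_{1/(n-1)}^1\boxtimes\mathbf{b}_{1/2}^\rho$ then yields $\mathbf{b}_{1/n}^\rho\in ID(\boxplus)$.

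I do not expect a genuine obstacle here; the argument is a short assembly of facts already available. The only points that call for a little care are the identification of $\mathbf{b}_\alpha^1$ with the one-atom continuous Boolean convolution $\mathbf{b}(\delta_\alpha)$ so that the free regularity in Theorem \ref{mix1} applies, the use of commutativity of $\boxtimes$ to align the factors with the hypothesis of Theorem \ref{closure}, and the separate handling of $n=2$, where the constraint $s,t>0$ in the reproducing property is not met.
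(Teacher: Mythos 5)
Your argument is correct, and it follows the same basic strategy as the paper (reproducing property plus Theorem \ref{closure} plus Proposition \ref{T2}), but the decomposition is organized differently. The paper proceeds by induction on $n$, applying the reproducing property with $s=1$ at each step to write $\mathbf{b}^\rho_{1/(n+1)}=\mathbf{b}^\rho_{1/n}\boxtimes\mathbf{b}^1_{1/2}$; the only free regularity it needs is that of $\mathbf{b}^1_{1/2}$, which Proposition \ref{T2} supplies directly. You instead do a one-shot factorization $\mathbf{b}^\rho_{1/n}=\mathbf{b}^\rho_{1/2}\boxtimes\mathbf{b}^1_{1/(n-1)}$ (taking $t=1$, $s=n-2$), which avoids the induction but shifts the burden to the free regularity of $\mathbf{b}^1_{1/(n-1)}$; your identification $\mathbf{b}^1_{1/(n-1)}=\mathbf{b}(\delta_{1/(n-1)})$ is correct (the energy functions agree), so Theorem \ref{mix1} legitimately provides this, and the appeal to commutativity of $\boxtimes$ to fit the hypotheses of Theorem \ref{closure}, as well as the separate treatment of $n=2$ via Proposition \ref{T2}, are both fine. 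The trade-off is that the paper's inductive route is self-contained at the level of Proposition \ref{T2}, whereas yours imports Theorem \ref{mix1} (whose free regularity claim rests on an external result) but dispenses with the induction; alternatively, you could recover the free regularity of $\mathbf{b}^1_{1/(n-1)}$ within your scheme by iterating the reproducing property with $\rho=1$ and using closure of free regular measures under $\boxtimes$, which essentially reproduces the paper's induction.
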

\begin{proof}
Since $\mathbf{b}^\rho_{1/2} \in ID(\boxplus)$ and $\mathbf{b}^1_{1/2}$ is free regular by Proposition \ref{T2}, $\mathbf{b}^\rho_{1/2}\boxtimes \mathbf{b}^1_{1/2}=\mathbf{b}^\rho_{1/3}$ is also freely infinitely divisible by Theorem \ref{closure}. In a similar way, by induction, we conclude that $\textbf{b}^\rho_{1/n}$ is $\boxplus$-infinitely divisible for $n \geq 2$. 
\end{proof}

We note here an interplay among free, Boolean and classical stable laws. Let $\textbf{s}_\alpha$ be a free strictly stable law on $[0,\infty)$ and $\textbf{n}_\alpha$ be a classical strictly stable law defined by 
\[
\begin{split}
&\phi_{\textbf{s}_\alpha} (z) = -e^{i\alpha \pi}z^{1-\alpha},~~~z \in \mathbb{C}^+, \\
&\int_{0}^\infty e^{-xz} \textbf{n}_\alpha(dx)= e^{-z^\alpha}, ~~z >0. 
\end{split}
\]
Let $\circledast$ denote the classical multiplicative convolution: $$\int_{[0,\infty)}f(x)(\mu \circledast \nu)(dx) = \int_{[0,\infty)^2}f(xy)\mu(dx)\nu(dy)$$ for any bounded continuous function $f$ on $[0,\infty)$.  
If $\check{\textbf{s}}_\alpha$ and $\check{\textbf{n}}_\alpha$ respectively denote the push-forwards of $\textbf{s}_\alpha$ and $\textbf{n}_\alpha$ by the map $x \mapsto 1/x$, then $\textbf{s}_\alpha \boxtimes \check{\textbf{s}}_\alpha = \textbf{n}_\alpha \circledast \check{\textbf{n}}_\alpha$ for $\alpha \in (0,1]$ as proved in  \cite{BePa}.  Moreover, this coincides with a Boolean stable law:  
$$
\textbf{s}_\alpha \boxtimes \check{\textbf{s}}_\alpha = \textbf{n}_\alpha \circledast \check{\textbf{n}}_\alpha = \textbf{b}_\alpha^1.  
$$
This fact follows from the calculation of the density shown in Proposition A4.4 of \cite{BePa}.

\bibliographystyle{amsplain}

\end{document}